\def\titlerunning#1{\gdef\titrun{#1}}
\def\author#1{\gdef\autrun{\def\and{\unskip, }#1}\gdef\@author{#1}}
\def\address#1{{\def\and{\\\hspace*{18pt}}\renewcommand{\thefootnote}{}%
		\footnote {#1}}%
	\markboth{\autrun}{\titrun}}
\def\email#1{e-mail: #1}
\def\keywords#1{\par\medskip
	\noindent\textbf{Keywords.} #1}
\newtheorem{theorem}{Theorem}[section]
\newtheorem{corollary}[theorem]{Corollary}
\newtheorem{lemma}[theorem]{Lemma}
\newtheorem{proposition}[theorem]{Proposition}
\theoremstyle{definition}
\newtheorem{definition}[theorem]{Definition}
\newtheorem{remark}[theorem]{Remark}
\theoremstyle{example}
\newtheorem{example}[theorem]{Example}
\numberwithin{equation}{section}
\def \a {\alpha }
\def \b {\beta}
\def \de {\delta}
\def \De {\Delta}
\def \la {\lambda}
\def \La {\Lambda}
\def\w {\omega}
\def\Om{\Omega}
\def\pa{\partial}
\def\na {\nabla}
\begin{document}
	\baselineskip=17pt
	
	\titlerunning{$L^{2}$ harmonic forms on complete special holonomy manifolds}
	\title{$L^{2}$ harmonic forms on complete special holonomy manifolds}
	
	\author{Teng Huang}
	
	\date{}
	
	\maketitle
	
	\address{T. Huang: School of Mathematical Sciences, Soochow University, Suzhou, 215006 and School of Mathematical Sciences, University of Science and Technology of China, Hefei, 230026, People's Republic of China; \email{htmath@ustc.edu.cn; htustc@gmail.com}}
	
	\begin{abstract}
		In this article, we consider $L^{2}$ harmonic forms on a complete non-compact Riemannian manifold $X$ with a nonzero parallel form $\w$. The main result is that if $(X,\w)$ is a complete $G_{2}$- ( or $Spin(7)$-) manifold with a $d$(linear) $G_{2}$- (or $Spin(7)$-) structure form $\w$, then the $L^{2}$ harmonic $2$-forms on $X$ vanish. As an application, we prove that the instanton equation with square integrable curvature on $(X,\w)$ only has trivial solution. We would also consider the Hodge theory on the principal $G$-bundle $E$ over $(X,\w)$.
	\end{abstract}
	\keywords{$L^{2}$ harmonic form; $G_{2}$- ($Spin(7)$-) manifold; $d$(linear)-form; gauge theory}
	\section{Introduction}
	Let $X$ be a $C^{\infty}$-manifold equipped with a differential form $\w$. This form is called parallel if $\w$ is preserved by the Levi-Civita connection: $\na\w=0$. This identity gives a powerful restriction on the holonomy group $\rm{Hol}(X)$. In K\"{a}hler geometry the parallel forms are the K\"{a}hler form and its powers. The algebraic geometers obtained many results of topological and geometric on studying the corresponding algebraic structure. In $G_{2}$- or $Spin(7)$-manifold, the parallel form is the $G_{2}$- or $Spin(7)$-structure. In \cite{Ver}, the author had  generalized some of these results on K\"{a}hler manifolds to other manifolds with a parallel form, especially the parallel $G_{2}$-manifolds. The results which obtained on \cite{Ver} can be summarized as K\"{a}hler identities for $G_{2}$-manifolds.
	
	The theory of $G_{2}$-manifolds is one of the places where mathematics and physics interact most strongly \cite{KL,LL}. In string theory, $G_{2}$-manifolds are expected to play the same role as Calabi-Yau manifolds in the usual A- and B-models of type-II string theories. There are many results on the construction of $G_{2}$-manifolds \cite{Bry,Joy1,Joy2,Kov}. Hitchin constructed a geometry flow \cite{Hit2} which physicists called Hichin's flow; it turned out to be extremely important in string physics.
	
	A basic question, pertaining both the function theory and topology on $X$, is: when are there nontrivial harmonic forms on $X$? When $X$ is not compact, a growth condition on the harmonic forms at infinity must be imposed, in order that the answer to this question be useful. A natural growth condition is square integrable; if $\La^{p}_{(2)}(X))$ denotes the $L^{2}$ $p$-forms on $X$ and $\mathcal{H}^{p}_{(2)}(X)$ the harmonic forms in $\La^{p}_{(2)}(X)$. One version of this basic question is: what is the structure of $\mathcal{H}^{p}_{(2)}(X)$? The study of $L^{2}$ harmonic forms on a complete Riemannian manifold is a very interesting and important subject; it also has numerous applications in the field of Mathematical Physics (see for example \cite{Hit}).
	
	In \cite{Gro}, Gromov states that if the K\"{a}hler form $\w$ on a complete K\"{a}hler satisfies $\w=\rm{d}\theta$, where $\theta$ is a bounded one-form, the only $L^{2}$ harmonic forms lie in the middle dimension. There are many complete K\"{a}hler manifolds with a exact K\"{a}hler form $\w$ \cite{CX,Gro,JZ}. In \cite{CX,JZ},\ they extended Gromov's theorem to the case of the one form $\theta$ is linear growth.
	
	A $G_{2}$- or a $Spin(7)$-structure of a $7$-, $8$-manifold is given by a parallel $3$-form $\phi$ or $4$-form $\Om$ (See \cite{Joy2} Section 10). It is also very intriguing to construct some examples of the $G_{2}$- or $Spin(7)$-manifolds with a $d$(linear) structure form. 
	\begin{example}
		There are some trivial examples of $G_{2}$-manifolds and $Spin(7)$-manifolds satisfy the growth conditions required.\\
		(1) Let $X$ be a complete connected manifold with zero sectional curvature and $\w$ be a parallel differential $k$-form on $X$, then the Theorem 1.1 \cite{CX} states that $\w$ is $d$(linear). But the Killing-Hopf theorem states that $X$ is isometric to a quotient of a Euclidean space by a group acting freely and properly discontinuously.\\
		(2) $C(X)=(\mathbb{R}^{+}\times X,\bar{\rm{g}})$ with $\rm{\bar{g}}=dr^{2}+r^{2}\rm{g}$ as the Riemannian or metric cone over $X$.
		It is well known that $X$ admits a real Killing spinor if and only if $C(X)$ admits a parallel spinor. Then, $C(X)$ has restricted holonomy and for any nearly K\"{a}hler 6-manifold $X$, $C(X)$ has holonomy $G_{2}$ and  for any nearly parallel $G_{2}$-manifold $X$, $C(X)$ has holonomy $Spin(7)$. We can show that the cone $C(X)$ is also the model for the growth conditions required (See Section 3). But following Hopf-Rinow theorem, it implies that the cone manifold $C(X)$ is noncomplete.
	\end{example}
	In this article, we could prove that if $X$ is a complete $G_{2}$- (or $Spin(7)$-) manifold with a $d$(linear) $G_{2}$- (or $Spin(7)$-) structure $\phi$ (or $\Om$), then $\mathcal{H}_{(2)}^{k}(X)=\{0\}$ for $k=0,1,2$, See Theorem \ref{T4} and \ref{T5}. We will also show that the structure form could not be $d$(bounded), See Proposition \ref{P6}.
	\begin{remark}
		It well known that if $X$ is a connected, complete, non-compact manifold with nonnegative Ricci curvature, then $\mathcal{H}^{i}_{(2)}(X)=\{0\}$, $i=0,1$. One knows that $G_{2}$-manifold or $Spin(7)$-manifold is Ricci flat, then $\mathcal{H}^{i}_{(2)}(X)=\{0\}$, $i=0,1$. 
	\end{remark}
	Instantons on the higher dimension, proposed in \cite{CDFJ} and studied in \cite{Car,DT,DS,War}, are important in mathematics \cite{DS} and string theory \cite{GSW}. Instantons are important objects in modern field theories. To construct nontrivial solution of instanton equations over a non-compact manifold is very important for high energy physics. It well known that the structures of the cylinders and metric cones over the six-, seven- and eight-dimensional manifolds with structure group $SU(3)$, $G_{2}$ and $Spin(7)$ are inherited from the base manifolds \cite{GLNP}. Constructions of solutions of the instanton equations on cylinders over nearly K\"{a}hler $6$-manifolds and nearly parallel $G_{2}$ manifold were considered in \cite{BILL,HILP,ID,ILPR}. In \cite{GLNP}, they were interested in cone structures constructed over nearly K\"{a}hler $6$-folds $X^{6}$. Its metric cone has $G_{2}$-holonomy if we normalize the nearly K\"{a}hler manifold such that its Eintein constant is $5$. The cylinder over a parallel $G_{2}$-manifold has $Spin(7)$-holonomy. They showed that there was a $G_{2}$-instanton on these $G_{2}$-manifolds which given rise to a $Spin(7)$-instanton in eight dimensions.   
	
	In this article, we observe that if $(X^{n},\w)$ is a complete Riemannian manifold with a $d$(linear)  $k$-form $\w$, then $\int_{X}\a\wedge\w=0$, where $\a$ is a closed $(n-k)$-form in $L^{1}$, See Lemma \ref{L1}. We can prove a vanishing theorem as follows:  if $X$ is a complete $G_{2}$-(or $Spin(7)$-) manifold with a $d$(linear) $G_{2}$- (or $Spin(7)$-) structure $\phi$ (or $\Om$),  the $L^{2}$ solutions of the instanton equation are trivial, See Theorem \ref{T4.3}. In \cite{ILPR} section $4$, the authors confirmed that the standard Yang-Mills functional was infinite on their solutions. The author was  inspired by those results; he proved that the solutions of instantons with square integrable curvature on the cylinder over a compact Riemannian manifold with a real Killing spinor are trivial \cite{Hua}. We observe that the  cylinder $Cyl(X):=(\mathbb{R}\times X, dt^{2}+\rm{g}_{X})$ over a closed Riemanninan manifold $X$ is complete. Combining Corollary \ref{C2}, we can give another way to prove the vanishing theorem in \cite{Hua}. Furthermore, we also prove that if the curvature of the connection satisfies a mild condition, then the instanton is a flat connection, See Theorem \ref{T4.6}.
	\begin{remark}
		The vanishing theorem \ref{T4.3} only means that the nontrivial instantons on a complete Riemannian manifold with a $d$(linear) parallel form must have infinite standard Yang-Mills action. However we cannot catch any information of the topological numbers associated with the instanton solutions, they might even be finite. For example, $\mathbb{R}^{8}$ is a model for the growth conditions required. The well known $Spin(7)$-instanton solution on $\mathbb{R}^{8}$ constructed in S. Fubini and H. Nicolai \cite{FN} has infinite Yang-Mills action but finite topological numbers.
	\end{remark}
	We also consider the Hodge theory on a principal $G$-bundle $E$ over a complete manifold $X$. We denote $H^{p}_{(2)}(X,E)$ by the space of $L^{2}$ harmonic $p$-forms $\La^{p}_{(2)}(X,E)$ respect to the Laplace-Beltrami operator $\De_{A}:=d_{A}d^{\ast}_{A}+d_{A}^{\ast}d_{A}$ (See Definition \ref{D4.5})
	The space $H^{p}_{(2)}(X,E)$ depends on the connection. In this article, we assume that $E$ possesses a flat connection $d_{A}$ which means that $F_{A}=0$, or equivalently, that $E$ is given by a representation $\pi_{X}\rightarrow U(r)$.  Then we would prove that  if $(X,\phi)$ is a complete $G_{2}$-manifold with a $d$(linear) $G_{2}$-structure $\phi$, then $H^{p}_{(2)}(X,E)=0$ unless $p\neq 3,4$, See Theorem \ref{T2}.
	\section{Riemannian manifolds with a parallel differential form}
	In this section, we recall some notations and definitions on differential geometry \cite{Ver}. Let $X$ be a $C^{\infty}$-manifold. We denote by $\La^{\ast}(X)$ the smooth forms on $X$. Given an odd or even from $\a\in\La^{\ast}(X)$, we denote by $\tilde{\a}$ its parity, which is equal to $0$ for even forms, and $1$ for odd forms. An operator $f\in\rm{End}(\La^{\ast}(X))$ preserving parity is called $even$, and one exchanging odd and even forms is odd, $\tilde{f}$ is equal to $0$ for even forms and $1$ for odd ones.
	
	Given a $C^{\infty}$-linear map $\La^{1}(X)\xrightarrow{p}\La^{odd}(X)$ or  $\La^{1}(X)\xrightarrow{p}\La^{even}(X)$, $p$ can be uniquely extended to a $C^{\infty}$-linear derivation $\rho$ on $\La^{\ast}(X)$, using the rule
	\begin{equation*}
	\begin{split}
	&\rho|_{\La^{0}(X)}=0,\\
	&\rho|_{\La^{1}(X)}=p,\\
	&\rho(\a\wedge\b)=\rho(\a)\wedge\b+(-1)^{\tilde{\rho}\tilde{\a}}\a\wedge\rho(\b).\\
	\end{split}
	\end{equation*}
	Verbitsky gave a definition of the structure operator of $(X,\w)$, see \cite{Ver} Definition 2.1.
	\begin{definition}\label{D2.1}
		Let $X$ be a Riemannian manifold equipped with a parallel differential $k$-form $\w$.\ Consider an operator $\underline{C}:\La^{1}(X)\rightarrow\La^{k-1}(X)$ mapping $\a\in\La^{1}(X)$ to $\ast(\ast\w\wedge\a)$. The corresponding differentiation
		$$C:\La^{\ast}(X)\rightarrow\La^{\ast+k-2}(X)$$
		is called the structure operator of $(X,\w)$.
	\end{definition}
	\begin{lemma}\label{L4}
		Let $X$ be a Riemannian manifold equipped with a parallel differential $k$-form $\w$, and  $L_{\w}$ the operator $\a\mapsto\a\wedge\w$. Then
		$$d_{C}=\{L_{\w},d^{\ast}\},$$
		where $d_{C}$ is the supercommutator $\{d,C\}:=dC-(-1)^{\tilde{C}}Cd$.
	\end{lemma}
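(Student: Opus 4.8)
The plan is to prove the identity pointwise, reducing it to a computation on the flat model by exploiting that $\w$ is parallel. First I would record the standard local expressions for $d$ and $d^{\ast}$ in terms of the Levi--Civita connection: fixing $x\in X$ and an orthonormal coframe $\{e^{i}\}$ that is synchronous at $x$ (so that $\na e^{i}|_{x}=0$), one has, at $x$,
$$d=\sum_{i}e^{i}\wedge\na_{e_{i}},\qquad d^{\ast}=-\sum_{i}\iota_{e_{i}}\na_{e_{i}}.$$
Since $\w$ is parallel, the zeroth-order operators built from it and from the metric---namely $L_{\w}$, the Hodge star $\ast$, and hence the structure operator $C$ together with the map $\underline{C}$---all commute with each $\na_{e_{i}}$, while the frame terms $\na e^{i}$ vanish at $x$. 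Consequently both $d_{C}=\{d,C\}$ and $\{L_{\w},d^{\ast}\}$ are first-order operators whose coefficients are universal algebraic expressions in $\w|_{x}$ and $g|_{x}$; it therefore suffices to verify the identity for a constant form $\w$ on flat $\mathbb{R}^{n}$, where every such form is parallel.

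Next I would carry out the reduction on each side. For the left-hand side, commuting $\na_{e_{i}}$ through $C$ gives, at $x$,
$$d_{C}\a=\sum_{i}\big(e^{i}\wedge C-(-1)^{\tilde{C}}C\,(e^{i}\wedge\,\cdot\,)\big)\na_{e_{i}}\a=\sum_{i}\{L_{e^{i}},C\}\na_{e_{i}}\a,$$
and since $C$ is the derivation extending $\underline{C}$, the super-Leibniz rule yields $\{L_{e^{i}},C\}=-(-1)^{\tilde{C}}\,\underline{C}(e^{i})\wedge(\,\cdot\,)$, i.e. wedging by $\underline{C}(e^{i})=\ast(\ast\w\wedge e^{i})$. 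For the right-hand side I would expand $\{L_{\w},d^{\ast}\}\a$ using $d^{\ast}=-\sum_{i}\iota_{e_{i}}\na_{e_{i}}$ and the anti-derivation rule $\iota_{e_{i}}(\b\wedge\w)=(\iota_{e_{i}}\b)\wedge\w+(-1)^{|\b|}\b\wedge(\iota_{e_{i}}\w)$; after invoking $\na\w=0$ this likewise collapses to a sum $\sum_{i}(\text{algebraic operator})\na_{e_{i}}\a$.

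The remaining content is then the pointwise algebraic identity matching the two coefficient operators. Here I would invoke the duality between contraction and the Hodge star, $\ast(\ast\w\wedge e^{i})=(-1)^{k(n-k)}\iota_{e_{i}}\w$, which identifies $\underline{C}(e^{i})$ with $\iota_{e_{i}}\w$ up to sign, so that the left-hand coefficient is wedging by $\iota_{e_{i}}\w$; the right-hand side, after the anti-derivation expansion, produces exactly the same wedging-by-$\iota_{e_{i}}\w$ operators. I expect the main obstacle to be the sign and parity bookkeeping in the graded framework: one must check that the stray terms of the form $(\iota_{e_{i}}\na_{e_{i}}\a)\wedge\w$ cancel, which depends delicately on the Koszul sign $(-1)^{|\b|}$ emitted by $\iota_{e_{i}}$ and on reading $L_{\w}$ with the parity $\tilde{\w}$ built into the supercommutator. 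Once these signs are organized correctly---equivalently, once the identity is verified directly on the flat model---the two sides agree term by term and the lemma follows.
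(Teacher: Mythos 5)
You should note at the outset that the paper never proves this lemma: it is imported from Verbitsky \cite{Ver}, and only the subsequent Proposition \ref{P1} is proved in the text. So your proposal can only be compared with the standard argument (which is essentially Verbitsky's): fix a synchronous orthonormal frame at a point, write $d=\sum_{i}e^{i}\wedge\nabla_{e_{i}}$ and $d^{\ast}=-\sum_{i}\iota_{e_{i}}\nabla_{e_{i}}$, use $\nabla\omega=0$ to commute $C$ and $L_{\omega}$ past $\nabla_{e_{i}}$, and compare the remaining zeroth-order coefficients. Your outline follows exactly this route; your reduction of the left-hand side, $d_{C}=\sum_{i}\{L_{e^{i}},C\}\nabla_{e_{i}}$ with $\{L_{e^{i}},C\}=-(-1)^{\tilde{C}}\,\underline{C}(e^{i})\wedge(\cdot)$, is correct, and so is the bridge identity $\underline{C}(e^{i})=\ast(\ast\omega\wedge e^{i})=(-1)^{k(n-k)}\iota_{e_{i}}\omega$.

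The genuine gap is exactly the step you defer to ``sign and parity bookkeeping'' and then assert works out. With the conventions the lemma actually states --- $L_{\omega}$ is \emph{right} wedging, $\alpha\mapsto\alpha\wedge\omega$ --- the stray terms $(\iota_{e_{i}}\nabla_{e_{i}}\alpha)\wedge\omega$ cancel in $\{L_{\omega},d^{\ast}\}$ only when $\tilde{\omega}$ is even, because only then is the supercommutator a true commutator. When $k$ is odd --- precisely the $G_{2}$ case $\omega=\phi$ that this paper needs --- one has $\{L_{\omega},d^{\ast}\}=L_{\omega}d^{\ast}+d^{\ast}L_{\omega}$, the stray terms reinforce instead of cancelling, and the right-hand side picks up the extra summand $2L_{\omega}d^{\ast}$. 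A two-line counterexample: on flat $\mathbb{R}^{3}$ with $\omega=dx^{1}$ one checks $C=\iota_{e_{1}}$ (both are odd derivations vanishing on functions and agreeing on $1$-forms), hence $d_{C}=d\iota_{e_{1}}+\iota_{e_{1}}d=\mathcal{L}_{e_{1}}$ by Cartan's formula, yet $\{L_{\omega},d^{\ast}\}(g\,dx^{2})=\partial_{1}g\,dx^{2}-2\,\partial_{2}g\,dx^{1}\neq\mathcal{L}_{e_{1}}(g\,dx^{2})$. The cure is that the clean algebraic identity your plan implicitly needs, $\{\iota_{e_{i}},L_{\omega}\}=L_{\iota_{e_{i}}\omega}$, holds for \emph{left} multiplication $\alpha\mapsto\omega\wedge\alpha$ and fails for right multiplication by an odd-degree form; rereading $L_{\omega}$ that way (and accepting an overall sign --- with the stated definitions of $\underline{C}$ and the formal adjoint one actually gets $d_{C}=-\{L_{\omega},d^{\ast}\}$ in the K\"{a}hler, $G_{2}$ and $Spin(7)$ cases, which is harmless downstream since Proposition \ref{P1} only invokes the graded Jacobi identity) the computation does close. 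As written, however, your concluding claim that ``the two sides agree term by term'' is false under the lemma's literal conventions, and a faithful execution of your plan would refute the odd-degree statement rather than prove it; the proof is incomplete until the convention is repaired and the signs are actually carried through.
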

	We recall some Generalized K\"{a}hler identities which proved by Verbitsky (See \cite{Ver} Proposition 2.5). Here, we give a proof in detail for the reader's convenience.
	\begin{proposition}\label{P1}
		Let $X$ be a Riemannian manifold equipped with a parallel differential $k$-form $\w$, $d_{C}$ the twisted de Rham operator constructed above and $d^{\ast}_{C}$ its Hermitian adjoint. Then,\\
		(i) The following supercommutators vanish:
		$$\{d,d_{C}\}=0,\ \{d,d_{C}^{\ast}\}=0,\ \{d^{\ast},d_{C}\}=0,\ \{d^{\ast},d_{C}^{\ast}\}=0.$$
		(ii) The Laplacian $\De=\{d,d^{\ast}\}$ commutes with $L_{\w}:\a\mapsto\a\wedge\w$ and it adjoint operator, denoted as $\La_{\w}:\La^{i}(X)\rightarrow\La^{i-k}(X)$.
	\end{proposition}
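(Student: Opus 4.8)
The plan is to prove (i) by a direct supercommutator computation and then to deduce (ii) from it. For (i) the crucial point is that $d_{C}$ admits two descriptions: by definition $d_{C}=\{d,C\}=dC-(-1)^{\tilde{C}}Cd$, and by Lemma \ref{L4} also $d_{C}=\{L_{\w},d^{\ast}\}=L_{\w}d^{\ast}-(-1)^{\tilde{C}}d^{\ast}L_{\w}$. To get $\{d,d_{C}\}=0$ I would insert the first description into $\{d,d_{C}\}=dd_{C}-(-1)^{\tilde{d_{C}}}d_{C}d$; since $d^{2}=0$, the surviving terms are all multiples of $dCd$ and cancel in pairs. Dually, inserting the second description into $\{d^{\ast},d_{C}\}$ and using $(d^{\ast})^{2}=0$ leaves only multiples of $d^{\ast}L_{\w}d^{\ast}$, which again cancel. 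The remaining identities $\{d^{\ast},d^{\ast}_{C}\}=0$ and $\{d,d^{\ast}_{C}\}=0$ then follow immediately by Hermitian conjugation: since $\{A,B\}^{\ast}=\{B^{\ast},A^{\ast}\}$, the first computation transposes to the third and the second to the fourth.

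For (ii) it suffices to show $[\De,L_{\w}]=0$, because $\La_{\w}=L_{\w}^{\ast}$ and $\De=\De^{\ast}$ give $[\De,L_{\w}]^{\ast}=-[\De,\La_{\w}]$, so the claim for $\La_{\w}$ is the Hermitian adjoint of the claim for $L_{\w}$. As $\w$ is parallel it is in particular closed, so $dL_{\w}=L_{\w}d$. I would then expand, through the graded Jacobi identity, $[L_{\w},\{d,d^{\ast}\}]=\{[L_{\w},d],d^{\ast}\}+(-1)^{\tilde{C}}\{d,[L_{\w},d^{\ast}]\}$; here $[L_{\w},d^{\ast}]$ is (a twist of) $d_{C}$ by Lemma \ref{L4}, so the second bracket vanishes by (i) and the whole commutator collapses onto the first term.

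The main obstacle is exactly that first term, i.e. the sign bookkeeping when $\w$ has odd degree --- the $G_{2}$ case $k=3$, which is the one actually needed for the applications. For even $k$ (the $Spin(7)$ form, $k=4$) wedging by $\w$ genuinely supercommutes with $d$, the first term drops out, and $[\De,L_{\w}]=-\{d,d_{C}\}=0$ follows at once from (i). For odd $k$ the supercommutator $\{d,L_{\w}\}$ does not vanish under the convention $L_{\w}\alpha=\alpha\wedge\w$, and a naive expansion leaves a residual second-order term $d^{\ast}L_{\w}d-dL_{\w}d^{\ast}$. I would dispose of this by noting that right- and left-multiplication by $\w$ differ only by the degree-scalar $\sigma|_{\La^{p}}=(-1)^{pk}$, which commutes with the degree-preserving operator $\De$; hence $[\De,L_{\w}]=[\De,\w\wedge(\cdot)]\,\sigma$, and in the left convention wedging by the closed form $\w$ does supercommute with $d$ for every $k$, so the Jacobi reduction to (i) goes through uniformly. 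Verifying that this residual truly cancels is the genuinely hard step: it is precisely where one must use the full strength of $\na\w=0$ rather than mere closedness, equivalently the two independent descriptions of $d_{C}$ supplied by Lemma \ref{L4}. The identity $[\De,\La_{\w}]=0$ then follows by adjunction, completing (ii).
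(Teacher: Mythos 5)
Part (i) of your proposal is correct and is essentially the paper's own proof: the paper merely packages your cancellation into the graded Jacobi identity (if $\de$ is odd and $\{\de,\de\}=0$ then $\{\de,\{\de,\chi\}\}=0$), applied once to $d_{C}=\{d,C\}$ and once to $d_{C}=\{L_{\w},d^{\ast}\}$ from Lemma \ref{L4}, followed by Hermitian conjugation.

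Part (ii) contains a genuine gap, and it sits exactly at the step you flag. You are right that with the stated convention $L_{\w}\a=\a\wedge\w$ one has $\{L_{\w},d\}=(1-(-1)^{k})L_{\w}d$, which is nonzero for odd $k$; the paper's line ``$\{L_{\w},d\}=0$ as $\w$ is closed'' is only literally true in the left-multiplication convention, which is what the argument implicitly uses. Your identification of the residual $d^{\ast}L_{\w}d-dL_{\w}d^{\ast}$ is also accurate. But your repair does not close the gap. Conjugating by $\sigma|_{\La^{p}}=(-1)^{pk}$ does fix the outer bracket, since $\{d,L'_{\w}\}=0$ for $L'_{\w}=\w\wedge(\cdot)$; however it simultaneously changes the inner bracket: a direct computation gives $\{L'_{\w},d^{\ast}\}=(-1)^{k}[L_{\w},d^{\ast}]\,\sigma$, where $[\cdot,\cdot]$ is the ungraded commutator, and for odd $k$ this is not $d_{C}=L_{\w}d^{\ast}+d^{\ast}L_{\w}$. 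So statement (i) says nothing about $\{d,\{L'_{\w},d^{\ast}\}\}$, and the claimed ``Jacobi reduction to (i)'' fails precisely in the $G_{2}$ case you singled out. Indeed, from (i) and closedness alone one obtains $[\De,L_{\w}]=2(d^{\ast}L_{\w}d-dL_{\w}d^{\ast})$, and no further formal consequence of Lemma \ref{L4} kills this term: the ``two independent descriptions of $d_{C}$'' are exhausted in proving (i), so your final appeal to them is circular. What is actually needed is a left-multiplication analogue of Lemma \ref{L4} --- that $[L_{\w},d^{\ast}]\sigma$ is again of the form $\{d,C'\}$ for some derivation $C'$ --- and that is a separate pointwise computation using $\na\w=0$, of the same nature as the proof of Lemma \ref{L4} itself, not formal superalgebra. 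Equivalently (and this is how the paper, following Verbitsky, should be read) one fixes the left convention throughout, so that Lemma \ref{L4} and $\{d,L_{\w}\}=0$ refer to the same operator; then the one-line Jacobi argument is complete, and the statement for $\a\mapsto\a\wedge\w$ follows by your $\sigma$-conjugation.
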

	\begin{proof}
		Let $\de$ be an odd element in a graded Lie superalgebra $A$ satisfying $\{\de,\de\}=0$. Using the graded Jacobi identity, we obtain
		$$\{\de,\{\de,\chi\}\}=-\{\de,\{\de,\chi\}\}+\{\{\de,\de\},\chi\}.$$
		This gives $2\{\de,\{\de,\chi\}\}=0$.
		
		Now,\ $\{d,d_{C}\}=\{d,\{d,d_{C}\}\}=0$ and $\{d^{\ast},d_{C}\}=\{d^{\ast},\{d^{\ast},L_{\w}\}\}=0$ by Lemma \ref{L4}. Taking Hermitian adjoints of these identities, we obtain the other two equations of Proposition \ref{P1} (i).
		
		Now, the graded Jacobi identity implies
		$$[L_{\w},\De]=\{L_{\w},\{d,d^{\ast}\}\}=(-1)^{\tilde{\w}}\{d,\{L_{\w},d^{\ast}\}\}$$
		we use $\{L_{\w},d\}=0$ as $\w$ is closed. This gives
		$$[L_{\w},\De]=(-1)^{\tilde{\w}}\{d,d_{C}\}=0$$
		as Proposition \ref{P1} implies. Taking the Hermitian adjoint, we also obtain $[\La_{\w},\De]=0$.
	\end{proof}
	\begin{corollary}(\cite{Ver} Corollary 2.9)
		Let $(X,\w)$ be a Riemannian manifold equipped with a parallel differential $k$-form $\w$, and $\a$ a harmonic form on $X$. Then $\a\wedge\w$ is harmonic.
	\end{corollary}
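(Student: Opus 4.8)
The plan is to derive the statement directly from the commutation relation $[L_{\w},\De]=0$ proved in Proposition \ref{P1}(ii); once that identity is in hand, essentially no further work is required. First I would recall that, by the very definition of the operator $L_{\w}$, the form $\a\wedge\w$ is exactly $L_{\w}\a$, and that a harmonic form $\a$ is by definition one with $\De\a=0$.

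The single substantive step is then to invoke Proposition \ref{P1}(ii), which asserts that $\De$ and $L_{\w}$ commute on all of $\La^{\ast}(X)$. Feeding the harmonic form $\a$ into this identity gives
$$\De(\a\wedge\w)=\De L_{\w}\a=L_{\w}\De\a=L_{\w}0=0,$$
so $\a\wedge\w$ is again annihilated by $\De$, i.e. harmonic. This completes the argument.

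I expect no genuine obstacle at this stage, since all the analytic and algebraic content has already been absorbed into Proposition \ref{P1}: there the vanishing of the supercommutators in part (i), together with the graded Jacobi identity and Lemma \ref{L4}, was what produced $[L_{\w},\De]=0$. The only points worth checking are bookkeeping ones — that $L_{\w}$ indeed maps $\La^{i}(X)$ into $\La^{i+k}(X)$, so that $\a\wedge\w$ is a well-defined form of the expected degree, and that we are using the $\De$-harmonic notion of harmonicity (on a general, possibly non-compact, manifold this is the correct reading, since closedness together with coclosedness coincides with $\De$-harmonicity only under completeness or compactness). An alternative, more hands-on route would verify $d(\a\wedge\w)=0$ and $d^{\ast}(\a\wedge\w)=0$ separately, using $d\w=0$ (as $\w$ is parallel, hence closed) for the first and the adjoint behaviour of $L_{\w}$ for the second; but the commutator argument above is shorter and is the one naturally suggested by the structure of Proposition \ref{P1}.
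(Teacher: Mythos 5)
Your argument is exactly the paper's proof: the paper also deduces the corollary immediately from Proposition \ref{P1}(ii), writing $\De(\a\wedge\w)=L_{\w}\De\a=0$ in effect, with no additional ingredient. Your extra remarks on degree bookkeeping and the $\De$-harmonic reading of harmonicity are fine but not needed.
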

	\begin{proof}
		It follows from Proposition \ref{P1} (ii).
	\end{proof}
	\begin{remark}
		If $(X,\w)$ is a $G_{2}$- or $Spin(7)$-manifold, Proposition \ref{P1} gives the Laplacian $\De$ commutes between the operators $L_{\w}$, $\La_{\w}$, $L_{\ast\w}$, $\La_{\ast\w}$.
	\end{remark}
	We begin the proof of Theorem \ref{T1} by recalling some basic facts in Hodge theory. If $X$ is an oriented complete Riemannian manifold, let
	$d^{\ast}$ be the adjoint operator of $d$ acting on the space of $L^{2}$ $k$-forms. Denoted by $\La^{k}_{(2)}(X)$ and $\mathcal{H}^{k}_{(2)}(X)$ the spaces of $L^{2}$ $k$-forms and $L^{2}$ harmonic $k$-forms, respectively. By elliptic regularity and completeness of the manifold, a $k$-form in $\mathcal{H}^{k}_{(2)}(X)$ is smooth, closed and co-closed.
	\begin{definition}
		A  differential form $\w$ on a complete non-compact Riemannian manifold is called $d$(linear) if there exists a differential form $\b$ and a number
		$c>0$ such that 
		\begin{equation*}
		\begin{split}
		&\w=d\b,\\
		&|\w(x)|\leq c,\\
		&|\b(x)|\leq c(1+\rho(x_{0},x)),
		\end{split}
		\end{equation*}
		where $\rho(x_{0},x)$ stands for the Riemannian distance between $x$ and a base point $x_{0}$.
	\end{definition}
	Jost and Zuo's theorem stated that if  a complete K\"{a}hler manifold $X$ with a $d$(linear) K\"{a}hler form $\w$, then the only $L^{2}$-harmonic forms lie in the middle dimension. In \cite{CX}, Cao-Xavier also obtained the same result of Jost-Zuo by another way.
	\begin{theorem}
		Let $(X,\w)$ be a complete K\"{a}hler $n$-manifold with a $d$(linear) K\"{a}hler form. Then all $L^{2}$-harmonic $p$-forms for $p\neq n$ vanish.
	\end{theorem}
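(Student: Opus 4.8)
The plan is to combine the Lefschetz $sl_{2}$-action on harmonic forms with a Gromov-type integration by parts made available by the hypothesis $\w=d\b$. By Proposition \ref{P1} (ii) the Laplacian $\De$ commutes with $L_{\w}$ and its adjoint $\La_{\w}$, and since $|\w|$ is pointwise bounded both operators preserve $L^{2}$. Thus the pointwise Lefschetz decomposition $\a=\sum_{j\geq 0}L_{\w}^{j}\a_{j}$ (with $\a_{j}$ primitive of real degree $p-2j$) carries harmonic $L^{2}$ forms to harmonic $L^{2}$ primitive forms, because the primitive projectors are universal polynomials in $L_{\w},\La_{\w}$ and therefore commute with $\De$. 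Writing $\dim_{\mathbb{R}}X=2n$, it then suffices to show that a primitive $L^{2}$ harmonic $q$-form with $q<n$ vanishes; the range $p>n$ follows from this via the Hodge-star isomorphism $\ast\colon\mathcal{H}^{p}_{(2)}(X)\xrightarrow{\sim}\mathcal{H}^{2n-p}_{(2)}(X)$.

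So let $\a$ be a primitive $L^{2}$ harmonic $q$-form with $q<n$. Weil's identity for primitive forms yields a nonzero constant $c_{n,q}$ with $\ast\a=c_{n,q}\,L_{\w}^{\,n-q}\a=c_{n,q}\,\w^{n-q}\wedge\a$, so that
$$\|\a\|_{L^{2}}^{2}=\int_{X}\a\wedge\ast\a=c_{n,q}\int_{X}\a\wedge\w^{n-q}\wedge\a .$$
Because $\w$ is closed and exact, $\w^{n-q}=d\b\wedge\w^{n-q-1}=d(\b\wedge\w^{n-q-1})$, where the nonnegative exponent $n-q-1\geq0$ is exactly what the hypothesis $q<n$ provides; at the middle degree $\w^{0}$ is not exact, which is precisely why $p=n$ must be excluded. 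Setting $\gamma=\b\wedge\w^{n-q-1}$ and using $d\a=0$, integration by parts gives $\a\wedge d\gamma\wedge\a=(-1)^{q}d(\a\wedge\gamma\wedge\a)$, so on a compact manifold Stokes' theorem would immediately give $\|\a\|_{L^{2}}^{2}=0$.

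The crux is to run this Stokes argument on the complete non-compact $X$, and this is where the linear growth of $\b$ is decisive. Fix a cutoff $\chi_{R}$ equal to $1$ on $B_{R}(x_{0})$, supported in $B_{2R}(x_{0})$, with $|d\chi_{R}|\leq C/R$. Since $|d\gamma|=|\w^{n-q}|$ is bounded and $\a\in L^{2}$, the form $\a\wedge d\gamma\wedge\a$ lies in $L^{1}$, so dominated convergence and Stokes give
$$c_{n,q}^{-1}\|\a\|_{L^{2}}^{2}=\lim_{R\to\infty}(-1)^{q}\int_{X}\chi_{R}\,d(\a\wedge\gamma\wedge\a)=-\lim_{R\to\infty}(-1)^{q}\int_{X}d\chi_{R}\wedge\a\wedge\gamma\wedge\a .$$
On the support of $d\chi_{R}$ one has $\rho(x_{0},x)\leq 2R$, so $|\gamma|\leq c(1+\rho)\cdot c^{\,n-q-1}\leq C'R$; together with $|d\chi_{R}|\leq C/R$ the two powers of $R$ cancel, giving
$$\Big|\int_{X}d\chi_{R}\wedge\a\wedge\gamma\wedge\a\Big|\leq C''\int_{B_{2R}\setminus B_{R}}|\a|^{2}.$$
As $\a\in L^{2}$ this tail tends to $0$, forcing $\|\a\|_{L^{2}}^{2}=0$ and hence $\a=0$.

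The main obstacle is the final estimate: the $1/R$ decay of $|d\chi_{R}|$ must exactly absorb the linear growth of $\b$, after which the vanishing of the $L^{2}$-tail of $\a$ closes the argument. This balance is the whole point of the $d$(linear) hypothesis — any faster growth of $\b$ would leave an uncontrolled factor of $R$ — while the remaining ingredients (the boundedness of $\w$, the completeness of $X$ used both for the exhausting cutoffs $\chi_{R}$ and for the elliptic regularity making $L^{2}$ harmonic forms closed and co-closed, and Weil's algebraic identity for primitive forms) are standard once Proposition \ref{P1} (ii) is in hand.
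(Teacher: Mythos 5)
Your reduction to primitive forms (via Proposition \ref{P1}(ii) and the fact that the Lefschetz projectors are universal polynomials in $L_{\w},\La_{\w}$, hence pointwise bounded and commuting with $\De$) is sound, and your cutoff computation is actually a clean variant of the paper's: choosing $|d\chi_{R}|\leq C/R$ so that the linear growth of $\b$ is exactly absorbed bounds the error by the $L^{2}$-tail $\int_{B_{2R}\setminus B_{R}}|\a|^{2}$, which avoids the subsequence extraction the paper needs in its Theorem \ref{T1}. The genuine gap is the Weil identity you invoke: for a primitive $q$-form, $\ast\a=c_{n,q}L_{\w}^{n-q}\a$ is \emph{false} unless $\a$ has pure bidegree. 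The correct statement on primitive $(r,s)$-forms is $\ast\a=(-1)^{q(q+1)/2}\,i^{r-s}\,\frac{1}{(n-q)!}L_{\w}^{n-q}\a$, and the factor $i^{r-s}$ depends on the bidegree. Consequently, for a real primitive form of mixed type, $\int_{X}\a\wedge\w^{n-q}\wedge\a$ is the \emph{indefinite} Hodge--Riemann pairing, not a positive multiple of $\|\a\|_{L^{2}}^{2}$. Concretely, for $n=3$, $q=2$ one has $\ast\a=\w\wedge\a$ on the $(2,0)+(0,2)$ part and $\ast\a=-\w\wedge\a$ on the primitive $(1,1)$ part, so
$$\int_{X}\a\wedge\w\wedge\a=\|\a^{2,0}+\a^{0,2}\|_{L^{2}}^{2}-\|\a^{1,1}\|_{L^{2}}^{2},$$
which can vanish without $\a$ vanishing. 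Your Stokes argument therefore only proves that this signed quantity is zero, and the final implication $\|\a\|_{L^{2}}^{2}=0$ does not follow.

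The repair requires an input you never invoke: on a K\"ahler manifold $\De=2\De_{\bar{\pa}}$ preserves bidegree, so the $(r,s)$-components of an $L^{2}$ harmonic form are themselves harmonic (hence closed, by completeness), primitive, and in $L^{2}$; running your cutoff argument on $\int_{X}\a^{r,s}\wedge\w^{n-q}\wedge\a^{s,r}$, which \emph{is} a unimodular constant times $\|\a^{r,s}\|_{L^{2}}^{2}$ since $\a^{s,r}=\overline{\a^{r,s}}$, then kills each component. Alternatively---and this is how the paper proceeds---one can bypass Weil's identity entirely: the paper's Theorem \ref{T1} shows $\w\wedge\a=0$ for \emph{every} $L^{2}$ harmonic form, by using that $\w\wedge\a$ is itself harmonic (Proposition \ref{P1}(ii)), hence co-closed, and pairing $d^{\ast}(\w\wedge\a)=0$ against $f_{j}\,\b\wedge\a$; the K\"ahler theorem then follows from the pointwise hard Lefschetz injectivity of $L_{\w}:\La^{p}\to\La^{p+2}$ for $p<n$, together with Hodge duality for $p>n$. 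That route needs only injectivity of a single wedge rather than a norm identity, which is exactly why it never meets the bidegree issue your argument runs into.
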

	\begin{example}
		Let $(X,\eta,\w)$ be a Sasakian $2n+1$-fold, $\eta$ is a contact $1$-form on $X$. Denoted by $C(X)$ the Riemannian cone of $(X,\rm{g})$. By definition, the Riemannian cone is a product $\mathbb{R}^{+}\times X$, equipped with a metric $dr^{2}+r^{2}\rm{g}$, where $r$ is a unit parameter of $\mathbb{R}^{+}$. Then the Riemannian cone $C(X)$ is a K\"{a}hler-manifold with a K\"{a}hler form $\w$ defined by
		$$\w=r^{2}d\eta+2rdr\wedge\eta,$$
		Since $\Om=d(r^{2}\eta)=d\b$ and $\rho(x_{0},x)=O(r)$, then the Riemaniann cone $C(X)$ is also the model for the growth conditions required.
	\end{example}
	We extend the idea of Cao-Xavier's to the case of Riemannian manifold equipped with a parallel differential form. Then we have
	\begin{theorem}\label{T1}
		Let $(X,\w)$ be a Riemannian manifold equipped with a parallel differential $k$-form $\w$. If $\w$ is also $d$(linear), then for any $\a\in\mathcal{H}^{p}_{(2)}(X)$, we have
		$$\w\wedge\a=0.$$
	\end{theorem}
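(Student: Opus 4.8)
The plan is to show that the $L^{2}$ form $\w\wedge\a$ vanishes by an integration-by-parts argument in which the linear growth of the primitive $\b$ is exactly absorbed by the decay of the gradient of a cut-off function, following the idea of Cao--Xavier. I would first record two structural facts. Since $|\w|\leq c$ and $\a\in L^{2}$, the form $\w\wedge\a=L_{\w}\a$ lies in $\La^{p+k}_{(2)}(X)$. By Proposition \ref{P1}(ii) the Laplacian $\De$ commutes with $L_{\w}$, so $\De(\w\wedge\a)=L_{\w}\De\a=0$; hence $\w\wedge\a\in\mathcal{H}^{p+k}_{(2)}(X)$, and by the elliptic-regularity/completeness fact recalled above it is closed and co-closed, in particular $d^{\ast}(\w\wedge\a)=0$. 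Second, writing $k=\deg\w$ so that $\deg\b=k-1$, the closedness $d\a=0$ together with $\w=d\b$ gives
$$d(\b\wedge\a)=d\b\wedge\a+(-1)^{k-1}\b\wedge d\a=\w\wedge\a,$$
so that $\w\wedge\a$ is exact with primitive $\b\wedge\a$, the latter being only linearly growing rather than square-integrable.

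Next I would fix a base point $x_{0}$ and choose cut-off functions $\chi_{R}$ with $\chi_{R}\equiv 1$ on $B_{R}(x_{0})$, $\mathrm{supp}\,\chi_{R}\subset B_{2R}(x_{0})$, and $|d\chi_{R}|\leq C/R$. Pairing $\w\wedge\a$ with itself against the weight $\chi_{R}^{2}$ and integrating by parts,
$$\int_{X}\chi_{R}^{2}\,|\w\wedge\a|^{2}=\int_{X}\chi_{R}^{2}\,\langle d(\b\wedge\a),\w\wedge\a\rangle=\int_{X}\langle \b\wedge\a,\,d^{\ast}\!\big(\chi_{R}^{2}(\w\wedge\a)\big)\rangle.$$
Using the identity $d^{\ast}\!\big(\chi_{R}^{2}(\w\wedge\a)\big)=\chi_{R}^{2}\,d^{\ast}(\w\wedge\a)-\iota_{(d\chi_{R}^{2})^{\sharp}}(\w\wedge\a)$ together with the co-closedness $d^{\ast}(\w\wedge\a)=0$, the first term drops out and only the gradient term, supported in the annulus $A_{R}:=B_{2R}(x_{0})\setminus B_{R}(x_{0})$, survives.

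Finally I would estimate this surviving term. On $A_{R}$ one has $|\b|\leq c(1+\rho(x_{0},x))\leq c(1+2R)\leq CR$, $|d\chi_{R}^{2}|=2\chi_{R}|d\chi_{R}|\leq C/R$, and $|\w\wedge\a|\leq c|\a|$, whence
$$\int_{X}\chi_{R}^{2}\,|\w\wedge\a|^{2}\leq\int_{A_{R}}|\b|\,|\a|\,|d\chi_{R}^{2}|\,|\w\wedge\a|\leq C'\int_{A_{R}}|\a|^{2}.$$
Since $\a\in L^{2}(X)$, the tail $\int_{A_{R}}|\a|^{2}\to 0$ as $R\to\infty$, while the left-hand side increases monotonically to $\int_{X}|\w\wedge\a|^{2}$; therefore $\w\wedge\a=0$.

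The main obstacle is the control of the single surviving cut-off term: the argument works precisely because the linear growth $|\b|\lesssim R$ is cancelled by the $1/R$ decay of $|d\chi_{R}|$, which is exactly what the $d$(linear) hypothesis is engineered to provide, leaving an $L^{2}$-tail of $\a$ that vanishes in the limit. One must also be careful to use only closedness of $\a$ (to exactify $\w\wedge\a$) and to obtain co-closedness of $\w\wedge\a$ from the completeness/Gaffney argument rather than from any compactness, while the uniform bound $|\w|\leq c$ is what keeps $\w\wedge\a$ in $L^{2}$ so that every pairing above is finite.
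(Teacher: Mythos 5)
Your proposal is correct, and its skeleton coincides with the paper's own proof (which follows Cao--Xavier): both rest on the two facts that $\w\wedge\a$ is an $L^{2}$ harmonic form (Proposition \ref{P1}(ii) plus boundedness of $\w$, hence closed and co-closed by completeness and elliptic regularity) and that $\w\wedge\a=d(\b\wedge\a)$ because $d\a=0$, and both then integrate by parts against compactly supported cutoffs. The one genuine divergence is the scaling of the cutoff, and it changes how the error term is killed. The paper takes $f_{j}=\eta(\rho(x_{0},\cdot)-j)$, whose differential is bounded by a constant independent of $j$ and supported in the unit-width annulus $B_{j+1}\setminus B_{j}$; its error term is therefore of size $(j+1)\int_{B_{j+1}\setminus B_{j}}|\a|^{2}$, which for a general $L^{2}$ form need \emph{not} tend to zero, so the paper must extract a subsequence along which it does, via a contradiction argument playing the divergence of $\sum_{j}\frac{1}{j+1}$ against $\|\a\|^{2}_{L^{2}(X)}<\infty$. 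Your cutoff instead scales with the radius, $|d\chi_{R}|\lesssim 1/R$ on the dyadic annulus $A_{R}=B_{2R}\setminus B_{R}$, so the decay of $|d\chi_{R}^{2}|$ cancels the linear growth $|\b|\lesssim R$ pointwise; what survives is $C\int_{A_{R}}|\a|^{2}$, the tail of a convergent integral, which tends to zero outright --- no subsequence and no contradiction argument needed. This is a cleaner limiting step at no loss of generality, since both arguments are calibrated exactly to linear growth of $\b$. Two small points to tidy in your write-up: the monotonicity of $R\mapsto\int_{X}\chi_{R}^{2}|\w\wedge\a|^{2}$ requires nested cutoffs (e.g. $\chi_{R}=\eta(\rho(x_{0},\cdot)/R)$ with $\eta$ nonincreasing), though Fatou's lemma would do just as well; and, exactly as in the paper, distance-based cutoffs are only Lipschitz rather than smooth, which is harmless for the integration by parts but deserves a word.
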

	\begin{proof}
		Let $\eta:\mathbb{R}\rightarrow\mathbb{R}$ be smooth, $0\leq\eta\leq1$,
		$$
		\eta(t)=\left\{
		\begin{aligned}
		1, &  & t\leq0 \\
		0,  &  & t\geq1
		\end{aligned}
		\right.
		$$
		and consider the compactly supported function
		$$f_{j}(x)=\eta(\rho(x_{0},x)-j),$$
		where $j$ is a positive integer.
		
		Let $\a$ be a harmonic $p$-form in $L^{2}$, and consider the form $\nu=\b\wedge\a$. Observing that $d^{\ast}(\w\wedge\a)=0$ since $\w\wedge\a\in\mathcal{H}_{(2)}^{p+k}(X)$  and noticing that $f_{j}\nu$ has compact support, one has
		$$0=\langle d^{\ast}(\w\wedge\a),f_{j}\nu\rangle_{L^{2}(X)}=\langle\w\wedge\a,d(f_{j}\nu)\rangle_{L^{2}(X)}.$$
		We further note that,\ since $\w=d\b$ and $d\a=0$,
		\begin{equation}\label{E3}
		\begin{split}
		0&=\langle\w\wedge\a, d(f_{j}\nu)\rangle_{L^{2}(X)}\\
		&=\langle\w\wedge\a, f_{j}d\nu\rangle_{L^{2}(X)}+\langle\w\wedge\a, df_{j}\wedge\nu\rangle_{L^{2}(X)}\\
		&=\langle\w\wedge\a, f_{j}\w\wedge\a\rangle_{L^{2}(X)}+\langle\w\wedge\a, df_{j}\wedge\nu\rangle_{L^{2}(X)}\\
		&=\langle\w\wedge\a, f_{j}\w\wedge\a\rangle_{L^{2}(X)}+\langle\w\wedge\a, df_{j}\wedge\b\wedge\a\rangle_{L^{2}(X)}.\\
		\end{split}
		\end{equation}
		Since $0\leq f_{j}\leq 1$ and $\lim_{j\rightarrow\infty}f_{j}(x)(\w\wedge\a)(x)=(\w\wedge\a)(x)$,it follows from the dominated convergence theorem that
		\begin{equation}\label{E4}
		\lim_{j\rightarrow\infty}\langle\w\wedge\a, f_{j}\w\wedge\a\rangle_{L^{2}(X)}=\|\w\wedge\a\|^{2}_{L^{2}(X)}.
		\end{equation}
		Since $\w$ is bounded,\ $supp(df_{j})\subset B_{j+1}\backslash B_{j}$ and $|\b(x)|=O(\rho(x_{0},x))$, one obtains
		\begin{equation}\label{E2}
		|\langle\w\wedge\a,df_{j}\wedge\b\wedge\a\rangle_{L^{2}(X)}|\leq (j+1)C\int_{B_{j+1}\backslash B_{j}}|\a(x)|^{2}dx,
		\end{equation}
		where $C$ is a constant independent of $j$.
		
		We claim that there exists a subsequence $\{j_{i}\}_{i\geq1}$ such that
		\begin{equation}\label{E1}
		\lim_{i\rightarrow\infty}(j_{i}+1)\int_{B_{j_{i}+1}\backslash B_{j_{i}}}|\a(x)|^{2}dx=0.
		\end{equation}
		If not, there would exist a positive constant $a$ such that
		$$\lim_{i\rightarrow\infty}(j_{i}+1)\int_{B_{j_{i}+1}\backslash B_{j_{i}}}|\a(x)|^{2}dx\geq a>0,\ j\geq1.$$
		This inequality implies
		\begin{equation}\nonumber
		\begin{split}
		\int_{X}|\a(x)|^{2}dx&=\sum_{j=0}^{\infty}\int_{B_{j+1}\backslash B_{j}}|\a(x)|^{2}dx\\
		&\geq a\sum_{j=0}^{\infty}\frac{1}{j+1}=+\infty\\
		\end{split}
		\end{equation}
		a contradiction to the assumption $\int_{X}|\a(x)|^{2}dx<\infty$. Hence, there exists a subsequence $\{j_{i}\}_{i\geq1}$ for which (\ref{E1}) holds. Using (\ref{E2}) and (\ref{E1}), one obtains
		\begin{equation}\label{E5}
		\lim_{i\rightarrow\infty}\langle\w\wedge\a, df_{j}\wedge\b\wedge\a\rangle_{L^{2}(X)}=0
		\end{equation}
		It now follows from (\ref{E3}), (\ref{E4}) and (\ref{E5}) that $\w\wedge\a=0$.
	\end{proof}
	\begin{remark}
		There are many complete manifolds with a $d$(linear) parallel differential form. If $X$ is a complete simply-connected manifold of non-positive sectional curvature and $\w$ is a parallel differential $k$-form on $X$, then the Theorem 1.1 on \cite{CX} states that $\w$ is $d$(linear).
	\end{remark}
	\begin{corollary}\label{C1}
		Let $(X,\w)$ be a Riemannian manifold equipped with a nonzero parallel differential $k$-form $\w$. If $\w$ is also $d$(linear), then $\mathcal{H}^{0}_{(2)}(X)=0$.
	\end{corollary}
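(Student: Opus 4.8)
The plan is to obtain this as an immediate consequence of Theorem \ref{T1} in the lowest degree $p=0$. First I would take any $\alpha\in\mathcal{H}^{0}_{(2)}(X)$; by the elliptic regularity remarks above it is a smooth $L^{2}$ function $f$ on $X$, which is in particular closed ($df=0$), so that $\omega\wedge\alpha=f\,\omega$ is harmonic (by the Corollary following Proposition \ref{P1}) and lies in $L^{2}$ because $\omega$ is bounded. Applying Theorem \ref{T1} with $p=0$ then gives $\omega\wedge\alpha=f\,\omega=0$ pointwise, since wedging against a $0$-form is just multiplication by $f$.

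The remaining step is to upgrade $f\,\omega\equiv 0$ to $f\equiv 0$, and here I would invoke the parallelism of $\omega$. Because $\nabla\omega=0$ we have $\nabla|\omega|^{2}=2\langle\nabla\omega,\omega\rangle=0$, so $|\omega|$ is constant on each connected component of $X$; as $\omega$ is assumed non-zero, this constant is strictly positive and $\omega$ is nowhere vanishing. Thus $f(x)\,\omega(x)=0$ with $\omega(x)\neq 0$ forces $f(x)=0$ for every $x$, so $\alpha=0$ and $\mathcal{H}^{0}_{(2)}(X)=0$.

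I do not expect any serious obstacle: the entire analytic content sits in Theorem \ref{T1}, and what remains is the elementary observation that a non-zero parallel form has constant, hence nowhere-vanishing, norm. The only point demanding a little care is the passage from ``non-zero'' to ``nowhere zero'', which uses connectedness of $X$ so that the constant value of $|\omega|^{2}$ is pinned down by its value at a single point; on a connected $X$ a parallel form that vanishes somewhere vanishes identically, so the hypothesis ``$\omega$ non-zero'' is exactly the hypothesis ``$\omega$ nowhere zero'' that the argument needs.
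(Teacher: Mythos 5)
Your proof is correct and follows exactly the route the paper intends: Corollary \ref{C1} is stated there without proof as an immediate consequence of Theorem \ref{T1}, namely apply it with $p=0$ to get $f\,\omega=0$ and then use that a parallel form has constant norm, so a non-zero parallel form is nowhere vanishing. Your explicit handling of the passage from ``non-zero'' to ``nowhere zero'' via connectedness is a careful spelling-out of what the paper leaves implicit, not a different argument.
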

	\begin{proof}
		We denote by $f$  a $L^{2}$-harmonic function on $X$. Then following Theorem \ref{T1}, $f\w=0$. Since $\w$ is nonzero all over $X$, it follows that $f$ vanish.
	\end{proof}
	As we derive estimates in this section (and also following sections), there will be many constants which appear. Sometimes, we will take care to bound the size of these constants, but we will also use the following notation whenever the value of the constants are unimportant. We write $\a\lesssim\b$ to mean that $\a\leq C\b$ for some positive constant $C$ independent of certain parameters on which $\a$ and $\b$ depend. The parameters on which $C$ is independent will be clear or specified at each occurrence. We also use $\b\lesssim\a$ and $\a\approx\b$ analogously.
	
	If we suppose the parallel $k$-form $\w$ is $d$(bounded), following the idea of Gromov \cite{Gro}, we can give a lower bound on the spectrum of the Laplace operator $\De$ on $\La^{(0)}_{(2)}$. 
	\begin{proposition}\label{P10}
		Let $(X,\w)$ be a Riemannian $n$-manifold equipped with a parallel, nonzero, differential $k$-form $\w$. If $\w$ is $d$(bounded), i.e, there exists a bounded $k-1$-form $\theta$ such that $\w=d\theta$, then any $\a\in\La^{0}_{(2)}(X)$ satisfies the inequality
		$$\|\a\|^{2}_{L^{2}(X)}\leq C\|\theta\|^{2}_{L^{\infty}(X)}\langle\De\a,\a\rangle_{L^{2}(X)},$$
		where $C=C(X,n)$ is a positive constant.
	\end{proposition}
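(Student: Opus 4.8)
The plan is to reduce the desired spectral inequality to a Poincar\'e-type estimate for the exterior derivative and then feed in the primitive $\theta$ through one integration by parts. First I would record the elementary consequences of $\w$ being parallel and non-zero: its pointwise norm is a positive constant $c_{0}:=|\w|$, and $\w$ is simultaneously closed and co-closed, so in particular $d^{\ast}\w=0$. For a $0$-form $\a$ one has $d^{\ast}\a=0$, hence $\langle\De\a,\a\rangle_{L^{2}(X)}=\|d\a\|^{2}_{L^{2}(X)}$; moreover, since $\w\wedge\a=\a\,\w$ pointwise, $\|\w\wedge\a\|^{2}_{L^{2}(X)}=c_{0}^{2}\|\a\|^{2}_{L^{2}(X)}$. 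Thus it suffices to bound $\|\w\wedge\a\|^{2}_{L^{2}(X)}$ from above by a multiple of $\|\theta\|_{L^{\infty}(X)}\,\|d\a\|_{L^{2}(X)}\,\|\a\|_{L^{2}(X)}$.

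The algebraic input is the Leibniz rule: since $\w=d\theta$ and $\a$ is a $0$-form,
$$\w\wedge\a=d(\theta\wedge\a)-(-1)^{k-1}\theta\wedge d\a.$$
Pairing this identity with $\w\wedge\a$ in $L^{2}(X)$ and moving $d$ onto $d^{\ast}$ in the first term gives
$$\|\w\wedge\a\|^{2}_{L^{2}(X)}=\langle\theta\wedge\a,\,d^{\ast}(\w\wedge\a)\rangle_{L^{2}(X)}-(-1)^{k-1}\langle\theta\wedge d\a,\,\w\wedge\a\rangle_{L^{2}(X)}.$$
Because $d^{\ast}\w=0$, the standard identity $d^{\ast}(f\,\w)=-\iota_{\nabla f}\w$ yields $d^{\ast}(\w\wedge\a)=-\iota_{\nabla\a}\w$, and the pointwise contraction bound $|\iota_{\nabla\a}\w|\leq|d\a|\,|\w|=c_{0}|d\a|$ controls the first factor. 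Estimating each term by Cauchy--Schwarz, together with the wedge estimate $|\theta\wedge\gamma|\leq c(n)|\theta|\,|\gamma|$ and $\|\theta\|_{L^{\infty}(X)}$, one obtains
$$c_{0}^{2}\|\a\|^{2}_{L^{2}(X)}=\|\w\wedge\a\|^{2}_{L^{2}(X)}\leq c(n)\,c_{0}\,\|\theta\|_{L^{\infty}(X)}\,\|d\a\|_{L^{2}(X)}\,\|\a\|_{L^{2}(X)}.$$
Dividing by $\|\a\|_{L^{2}(X)}$ and squaring produces $\|\a\|^{2}_{L^{2}(X)}\leq C\|\theta\|^{2}_{L^{\infty}(X)}\|d\a\|^{2}_{L^{2}(X)}$ with $C=C(X,n)=c(n)^{2}/c_{0}^{2}$, which is the claim since $\|d\a\|^{2}_{L^{2}(X)}=\langle\De\a,\a\rangle_{L^{2}(X)}$.

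The main obstacle is justifying the integration by parts on the complete non-compact $X$, i.e.\ that no boundary contribution at infinity survives. I would handle this exactly as in the proof of Theorem \ref{T1}, inserting the cutoffs $f_{j}(x)=\eta(\rho(x_{0},x)-j)$ and replacing $\theta\wedge\a$ by $f_{j}(\theta\wedge\a)$, which is compactly supported, so the integration by parts is legitimate. The only extra term is $\langle df_{j}\wedge\theta\wedge\a,\,\w\wedge\a\rangle$; since $|df_{j}|$ is uniformly bounded and supported in $B_{j+1}\backslash B_{j}$, and $\theta$ is bounded, it is dominated by $c_{0}\|\theta\|_{L^{\infty}(X)}\int_{B_{j+1}\backslash B_{j}}|\a|^{2}$, which tends to $0$ as $j\to\infty$ because $\a\in L^{2}(X)$. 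Note that here the crucial simplification over Theorem \ref{T1} is that no factor linear in $j$ appears, so the error vanishes directly on the $L^{2}$-tail and no passage to a subsequence is needed; the remaining convergences follow from $\theta\wedge\a,\ \w\wedge\a,\ d(\theta\wedge\a)$ and $d^{\ast}(\w\wedge\a)$ all lying in $L^{2}(X)$ together with dominated convergence. Once this limiting identity is established, the Cauchy--Schwarz estimates above complete the proof.
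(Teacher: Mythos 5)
Your proof is correct and follows essentially the same route as the paper's: both write $\w\wedge\a=d(\theta\wedge\a)-d\a\wedge\theta$ via the Leibniz rule, pair this with $\w\wedge\a$, integrate by parts, and close the estimate with Cauchy--Schwarz using $\|\w\wedge\a\|_{L^{2}(X)}\approx\|\a\|_{L^{2}(X)}$ (from $|\w|$ being a nonzero constant) and $\langle\De\a,\a\rangle_{L^{2}(X)}=\|d\a\|^{2}_{L^{2}(X)}$. The only differences are cosmetic: where the paper controls the boundary term by $\|d^{\ast}(\w\wedge\a)\|_{L^{2}(X)}\leq\langle\De(\w\wedge\a),\w\wedge\a\rangle^{1/2}_{L^{2}(X)}$ together with the commutation $\De(\a\wedge\w)=(\De\a)\wedge\w$, you instead use the pointwise identity $d^{\ast}(\a\,\w)=-\iota_{\na\a}\w$ (valid since $d^{\ast}\w=0$), and you make explicit the cutoff justification of the integration by parts on the complete non-compact manifold, which the paper leaves implicit.
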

	\begin{proof}
		Since $\w$ is a parallel differential form, then $\na|\w|^{2}=0$, i.e. $|\w|=constant$. Denoted $u\in\La^{0}(X)$, we observe that:
		$$|u\wedge\w|^{2}=\ast\big{(}(u\wedge\w)\wedge\ast(u\wedge\w)\big{)}=constant|u|^{2},$$
		and
		$$\De(u\wedge\w)\wedge\ast(u\wedge\w)=(\De u\wedge\w)\wedge\ast(u\wedge\w)=constant(\De u\wedge\ast u).$$
		These imply that
		$$\|u\|_{L^{2}(X)}=constant\|u\wedge\w\|_{L^{2}(X)},\ \langle\De(u\wedge\w),u\wedge\w\rangle_{L^{2}(X)}=constant\langle\De u,u\rangle_{L^{2}(X)}.$$
		Now, we write $\b=\a\wedge\w=d\eta-\tilde{\a}$, for $\eta=\a\wedge\theta$ and $\tilde{\a}=d\a\wedge\theta$ and observe that
		$$\|\eta\|_{L^{2}(X)}\lesssim\|\theta\|_{L^{\infty}(X)}\|\a\|_{L^{2}(X)}.$$
		Next, since
		\begin{equation}\nonumber
		\begin{split}
		\|\tilde{\a}\|_{L^{2}(X)}&\lesssim\|d\a\|_{L^{2}(X)}\|\theta\|_{L^{\infty}(X)}\\
		&\lesssim\langle\De\a,\a\rangle_{L^{2}(X)}^{1/2}\|\theta\|_{L^{\infty}(X)},\\
		\end{split}
		\end{equation}
		we have
		\begin{equation}\nonumber
		\begin{split}
		\|\b\|^{2}_{L^{2}(X)}&\leq|\langle\b,d\eta\rangle_{L^{2}(X)}|+|\langle\b,\tilde{\a}\rangle_{L^{2}(X)}|\\
		&\leq|\langle d^{\ast}\b,\eta\rangle_{L^{2}(X)}|+|\langle\b,\tilde{\a}\rangle_{L^{2}(X)}|\\
		&\lesssim\langle\De\b,\b\rangle_{L^{2}(X)}^{1/2}\|\theta\|_{L^{\infty}(X)}\|\b\|_{L^{2}(X)}
		+\|\b\|_{L^{2}(X)}\|d\a\|_{L^{2}(X)}\|\theta\|_{L^{\infty}(X)}\\
		&\lesssim\langle\De\a,\a\rangle_{L^{2}(X)}^{1/2}\|\theta\|_{L^{\infty}(X)}\|\b\|_{L^{2}(X)}.\\
		\end{split}
		\end{equation}
		This yields the desired estimate
		$$
		\|\a\|^{2}_{L^{2}(X)}\lesssim\|\b\|^{2}_{L^{2}(X)}
		\lesssim\|\theta\|^{2}_{L^{\infty}(X)}\langle\De\a,\a\rangle_{L^{2}(X)}.$$
		We complete this proof.
	\end{proof}
	In \cite{CY}, Cheng and Yau proved that the first eigenvaule of Laplace operator $\De$ is zero on a complete Ricci-flat manifold. Hence one can easily see the $G_{2}$- or $Spin(7)$-structure could not be $d$(bounded) since the Proposition \ref{P10} states that the first eigenvaule is nonzero if the structure form  is $d$(bounded).
	\begin{proposition}\label{P6}
		If $\phi$ (or $\Om$) is the $G_{2}$- (or $Spin(7)$-) structure from over a complete, non-compact $G_{2}$- (or $Spin(7)$-) manifold, then $\phi$ (or $\Om$)  could be not $d$(bounded).
	\end{proposition}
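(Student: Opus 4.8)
The plan is to argue by contradiction using Proposition \ref{P10} together with the Cheng-Yau result that the bottom of the spectrum of $\De$ on functions vanishes on a complete Ricci-flat manifold. Recall that a $G_{2}$- (or $Spin(7)$-) manifold is Ricci-flat, since its holonomy is contained in $G_{2}$ (or $Spin(7)$), both of which lie in the stabilizer of a parallel spinor. Hence the hypotheses of the Cheng-Yau theorem are met, and the first eigenvalue $\la_{0}(\De)$ of the Laplacian acting on $\La^{0}_{(2)}(X)$ equals zero.

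Suppose, toward a contradiction, that the structure form $\phi$ (or $\Om$) \emph{were} $d$(bounded), so that $\phi=d\theta$ (or $\Om=d\theta$) for some bounded form $\theta$. Then Proposition \ref{P10} applies: there is a positive constant $C=C(X,n)$ with
$$\|\a\|^{2}_{L^{2}(X)}\leq C\|\theta\|^{2}_{L^{\infty}(X)}\langle\De\a,\a\rangle_{L^{2}(X)}$$
for every $\a\in\La^{0}_{(2)}(X)$. Since $\theta$ is bounded, $\|\theta\|_{L^{\infty}(X)}$ is finite, so this inequality gives
$$\langle\De\a,\a\rangle_{L^{2}(X)}\geq \frac{1}{C\|\theta\|^{2}_{L^{\infty}(X)}}\,\|\a\|^{2}_{L^{2}(X)}.$$
By the variational (Rayleigh-quotient) characterization of the bottom of the spectrum, this forces $\la_{0}(\De)\geq (C\|\theta\|^{2}_{L^{\infty}(X)})^{-1}>0$. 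That strictly positive lower bound directly contradicts the Cheng-Yau equality $\la_{0}(\De)=0$, completing the proof.

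The only genuine subtlety to check is that the constant $\|\theta\|_{L^{\infty}(X)}$ is indeed finite under the $d$(bounded) hypothesis and that Proposition \ref{P10} is being quoted for precisely the right $k$ (namely $k=3$ for $\phi$ and $k=4$ for $\Om$, with $n=7$ and $n=8$ respectively); both are immediate from the definitions. The main conceptual point, which I would state explicitly, is the non-compactness assumption: on a \emph{compact} Ricci-flat manifold constant functions are harmonic and lie in $L^{2}$, so $\la_{0}=0$ is realized by an eigenfunction and the argument is vacuous, whereas on the complete non-compact manifolds under consideration $\la_{0}=0$ is the bottom of the essential spectrum and the estimate above cannot hold. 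Thus the incompatibility between the spectral gap produced by $d$(boundedness) and the Cheng-Yau vanishing is exactly what rules out a bounded primitive for the parallel structure form.
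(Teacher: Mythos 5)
Your proposal is correct and follows essentially the same route as the paper: the text preceding Proposition \ref{P6} derives it exactly as you do, by combining the spectral-gap estimate of Proposition \ref{P10} (applied to a hypothetical bounded primitive $\theta$ of the structure form) with the Cheng--Yau theorem \cite{CY} that the bottom of the spectrum of $\De$ on functions vanishes for a complete Ricci-flat manifold, so that $d$(boundedness) is ruled out by contradiction. Your closing aside about the compact case is slightly off (there the inequality of Proposition \ref{P10} fails already for nonzero constants, so the argument is not vacuous but in fact immediate), but this does not affect the proof in the non-compact setting at hand.
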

	\section{Special holonomy manifolds}
	\subsection{$G_{2}$-manifolds}
	\begin{definition}
		A $G_{2}$-manifold is a $7$-manifold $X$ equipped with a torsion-free $G_{2}$-structure $\phi$, that is $$\na_{\rm{g}_{\phi}}\phi=0,$$
		where $\rm{g}_{\phi}$ is the metric induced by $\phi$.
	\end{definition}
	Under the action of $G_{2}$, the space $\La^{2}(X)$ splits into irreducible representations, as follows:
	\begin{equation}\label{E6}
	\La^{2}(X)=\La^{2}_{7}(X)\oplus\La_{14}^{2}(X).
	\end{equation}
	where $\La^{i}_{j}$ is an irreducible $G_{2}$-representation of dimension $j$.
	These summands can be characterized as follows:
	\begin{equation}\nonumber
	\begin{split}
	&\La^{2}_{7}(X)=\{\a\in\La^{2}(X)\mid\ast(\a\wedge\phi)=2\a\},\\
	&\La^{2}_{14}(X)=\{\a\in\La^{2}(X)\mid\ast(\a\wedge\phi)=-\a\}=\{\a\in\La^{2}(X)\mid\a\wedge\ast\phi=0\}.\\
	\end{split}
	\end{equation}
	From the construction, it is clear that the splitting (\ref{E6}) can be obtained via the operator $L_{\phi}$, $\La_{\phi}$, $L_{\ast\phi}$, $\La_{\ast\phi}$. By Proposition \ref{P1} these operators commute with the Laplacian. Therefore, the harmonic forms also split:
	\begin{equation}\nonumber
	\begin{split}
	&\mathcal{H}^{2}_{(2)}(X)=\mathcal{H}^{2}_{7;(2)}(X)\oplus\mathcal{H}^{2}_{14;(2)}(X).\\
	\end{split}
	\end{equation}
	\begin{example}
		Let $(X,\w,\Om)$ be a nearly K\"{a}hler $6$-fold, see \cite{Ver2005,Ver2011}. There is a $(3,0)$-form $\Om$ with $|\Om|=1$, and
		$$d\w=3\la Re\Om,\ dIm\Om=-2\la\w^{2},$$
		where $\la$ is a nonzero real constant. For simply, we choose $\la=1$. Denoted by $C(X)$ the Riemannian cone of $(X,\rm{g})$. The Riemannian cone $\big{(}C(X),dr^{2}+r^{2}\rm{g}\big{)}$ is a $G_{2}$-manifold with torsion-free $G_{2}$-structure $\phi$ defined by
		$$\phi:=r^{2}\w\wedge dr+r^{3}Re\Om.$$
		Since $\phi=d(\frac{1}{3}r^{3}\w)=d\b$ and $\rho(x_{0},x)=O(r)$, then the Riemaniann cone $C(X)$ is also the model for the growth conditions required.
	\end{example}
	We will  show that the map $L_{\phi}:\La^{p}\rightarrow\La^{p+3}$ on the complete $G_{2}$-manifold is  injective for $p=0,1,2$ .
	\begin{lemma}\label{L2}
		Let $(X,\phi)$ be a complete $G_{2}$-manifold, for any $\a\in\La^{k}(X)$, $k=0,1,2$, satisfies the inequalities
		$$\|\a\|_{L^{2}(X)}\approx\|\a\wedge\phi\|_{L^{2}(X)},$$
		$$\langle\De\a,\a\rangle_{L^{2}(X)}\approx\langle\De(\a\wedge\phi),\a\wedge\phi\rangle_{L^{2}(X)}.$$
	\end{lemma}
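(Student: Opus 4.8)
The plan is to reduce both equivalences to pointwise algebraic identities and then integrate, exploiting two structural facts about the parallel form $\phi$: first, $\na\phi=0$ forces $|\phi|$ to be the constant $\sqrt{7}$; second, by Proposition \ref{P1} the Laplacian $\De$ commutes with $L_{\phi}:\a\mapsto\a\wedge\phi$, so that $\De(\a\wedge\phi)=(\De\a)\wedge\phi$. Granted a pointwise comparison $|\a\wedge\phi|\approx|\a|$ together with its polarized form controlling $\langle\a\wedge\phi,\b\wedge\phi\rangle$ by $\langle\a,\b\rangle$, the first inequality is immediate upon integration, and the energy inequality follows by substituting $\De\a$ for one factor and invoking the commutation relation. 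Thus the whole argument is local and $G_{2}$-representation-theoretic.

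First I would establish the pointwise norm comparison degree by degree. For $k=0$ it is trivial: $u\wedge\phi=u\phi$ and $|u\phi|^{2}=|\phi|^{2}u^{2}=7u^{2}$, with $\langle u\phi,v\phi\rangle=7uv$ pointwise. For $k=1$ I would use the identity $|\a\wedge\phi|^{2}=|\a|^{2}|\phi|^{2}-|\iota_{\a^{\sharp}}\phi|^{2}$, which comes from $\langle\a\wedge\phi,\a\wedge\phi\rangle=\langle\phi,\iota_{\a^{\sharp}}(\a\wedge\phi)\rangle$ and $\iota_{\a^{\sharp}}(\a\wedge\phi)=|\a|^{2}\phi-\a\wedge\iota_{\a^{\sharp}}\phi$; since $G_{2}$ acts transitively on unit vectors and $\sum_{i}|\iota_{e_{i}}\phi|^{2}=3|\phi|^{2}=21$, one gets $|\iota_{v}\phi|^{2}=3|v|^{2}$, hence $|\a\wedge\phi|^{2}=4|\a|^{2}$. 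So $\tfrac12 L_{\phi}$ is a pointwise isometry of $\La^{1}$ into $\La^{4}$, and by polarization $\langle\a\wedge\phi,\b\wedge\phi\rangle=4\langle\a,\b\rangle$. For $k=2$ I would read the constants directly off the characterizations following (\ref{E6}): on $\La^{2}_{7}$, $\ast(\a\wedge\phi)=2\a$ gives $\a\wedge\phi=2\ast\a$, while on $\La^{2}_{14}$, $\a\wedge\phi=-\ast\a$. Since $\ast$ is an isometry $\La^{2}\to\La^{5}$, writing $\a=\a_{7}+\a_{14}$ the cross term drops and $|\a\wedge\phi|^{2}=4|\a_{7}|^{2}+|\a_{14}|^{2}$, whence $|\a|^{2}\leq|\a\wedge\phi|^{2}\leq 4|\a|^{2}$. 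Integrating any of these gives $\|\a\|_{L^{2}(X)}\approx\|\a\wedge\phi\|_{L^{2}(X)}$.

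For the energy inequality I would combine these relations with $\De(\a\wedge\phi)=(\De\a)\wedge\phi$. In degrees $k=0,1$, where $L_{\phi}$ is a fixed constant times an isometry, this is immediate: $\langle\De(\a\wedge\phi),\a\wedge\phi\rangle=\langle(\De\a)\wedge\phi,\a\wedge\phi\rangle=c\langle\De\a,\a\rangle$ with $c=7$ and $c=4$ respectively. The genuinely different case is $k=2$, since $L_{\phi}$ scales $\La^{2}_{7}$ and $\La^{2}_{14}$ by different factors and so is not a scalar multiple of an isometry. Here I would use that, by Proposition \ref{P1}, $\De$ commutes with the $G_{2}$-projections, hence preserves the splitting $\a=\a_{7}+\a_{14}$ and $(\De\a)\wedge\phi=2\ast\De\a_{7}-\ast\De\a_{14}$, the two summands lying in the orthogonal pieces of $\La^{5}$. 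Since $\ast$ commutes with $\De$ and is an isometry, the cross terms vanish and
\[
\langle\De(\a\wedge\phi),\a\wedge\phi\rangle_{L^{2}(X)}=4\langle\De\a_{7},\a_{7}\rangle_{L^{2}(X)}+\langle\De\a_{14},\a_{14}\rangle_{L^{2}(X)},
\]
to be compared with $\langle\De\a,\a\rangle_{L^{2}(X)}=\langle\De\a_{7},\a_{7}\rangle_{L^{2}(X)}+\langle\De\a_{14},\a_{14}\rangle_{L^{2}(X)}$.

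Finally, because $\De$ is non-negative each Dirichlet term $\langle\De\a_{7},\a_{7}\rangle$ and $\langle\De\a_{14},\a_{14}\rangle$ is non-negative, so the two displayed quantities are comparable with constants $1$ and $4$, giving $\langle\De\a,\a\rangle_{L^{2}(X)}\approx\langle\De(\a\wedge\phi),\a\wedge\phi\rangle_{L^{2}(X)}$. The main obstacle is precisely this degree-two step: one cannot pull out a single scalar as in degrees $0$ and $1$, and the estimate hinges on the fact — supplied by Proposition \ref{P1} — that $\De$ respects the $G_{2}$-type decomposition, so that the Dirichlet energy splits orthogonally into its $\La_{7}$ and $\La_{14}$ contributions and non-negativity of each closes the argument.
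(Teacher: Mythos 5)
Your proposal is correct and takes essentially the same route as the paper's proof: pointwise constants in each degree (7 for functions, 4 for one-forms, and the split constants $4$ and $1$ on $\Lambda^{2}_{7}\oplus\Lambda^{2}_{14}$ via $\alpha\wedge\phi=2\ast\alpha^{7}-\ast\alpha^{14}$), combined with the commutation facts of Proposition \ref{P1} to handle the Dirichlet terms. The only differences are cosmetic: you obtain the degree-one constant from the contraction identity $|\iota_{v}\phi|^{2}=3|v|^{2}$ where the paper uses $\ast(\alpha\wedge\phi)\wedge\phi=-4\ast\alpha$, and you make explicit two points the paper leaves implicit, namely that $\Delta$ preserves the $\Lambda^{2}_{7}\oplus\Lambda^{2}_{14}$ splitting (killing the cross terms) and that non-negativity of each Dirichlet summand is what lets one compare $4a+b$ with $a+b$.
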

	\begin{proof}
		Let $\a,\b\in\La^{0}(X)$,\ we observe that:
		$$(\a\wedge\phi)\wedge\ast(\b\wedge\phi)=7\a\b\ast1.$$
		We take $\b=\a$, then
		$$\|\a\|^{2}_{L^{2}(X)}=\frac{1}{7}\|\a\wedge\phi\|^{2}_{L^{2}(X)},\ \langle\De\a,\a\rangle_{L^{2}(X)}=\frac{1}{7}\langle\De(\a\wedge\phi),\a\wedge\phi\rangle_{L^{2}(X)}.$$
		Let $\a,\b\in\La^{1}(X)$, we also observe that:
		$$\ast(\a\wedge\phi)\wedge(\b\wedge\phi)=4\ast\a\wedge\b,$$
		here we use the fact $\ast(\a\wedge\phi)\wedge\phi=-4\ast\a$ (See \cite{Bry2}). We take $\b=\a$, then
		$$\|\a\|^{2}_{L^{2}(X)}=\frac{1}{4}\|\a\wedge\phi\|^{2}_{L^{2}(X)},\ \langle\De\a,\a\rangle_{L^{2}(X)}=\frac{1}{4}\langle\De(\a\wedge\phi),\a\wedge\phi\rangle_{L^{2}(X)}.$$
		Let $\a\in\La^{2}(X)$, we can write $\a=\a^{7}+\a^{14}$, then  $\a\wedge\phi=2\ast\a^{7}-\ast\a^{14}$. Hence
		\begin{equation}\nonumber
		\|\a\wedge\phi\|^{2}_{L^{2}(X)}=4\|\a^{7}\|^{2}_{L^{2}(X)}+\|\a^{14}\|^{2}_{L^{2}(X)}\approx\|\a\|^{2}_{L^{2}(X)}.
		\end{equation}
		Since $[\De,L_{\phi}]=0$, we have $\De(\a\wedge\phi)=\De\a\wedge\phi=\ast\De(2\a^{7}-\a^{14})$.\ Then
		\begin{equation}\nonumber
		\begin{split}
		\langle\De(\a\wedge\phi),\a\wedge\phi\rangle_{L^{2}(X)}&=\langle\ast\De(2\a^{7}-\a^{14}),\ast(2\a^{7}-\a^{14})\rangle_{L^{2}(X)}\\
		&=4\langle\De\a^{7},\a^{7}\rangle_{L^{2}(X)}+\langle\De\a^{14},\a^{14}\rangle_{L^{2}(X)}\\
		&\approx\langle\De\a,\a\rangle_{L^{2}(X)}.\\
		\end{split}
		\end{equation}
	\end{proof}
	\begin{theorem}\label{T4}
		Let $(X,\phi)$ be a complete $G_{2}$-manifold with a $d$(linear) $G_{2}$-structure. Then, $\mathcal{H}^{k}_{(2)}(X)=\{0\}$ for $k=0,1,2$.
	\end{theorem}
	\begin{proof}
		We denote $\a$ by a harmonic $p$-form $\a$. Following the hypothesis of the structure form $\phi$ , we have $\a\wedge\phi=0$ (See Lemma \ref{L2}). Since $L_{\phi}:\La^{p}(X)\rightarrow\La^{p+3}(X)$ is injective for $p=0,1,2$ (See Lemma \ref{L2}), we have $\a\equiv0$.
	\end{proof}
	If we suppose that the $G_{2}$-structure $4$-form $\ast\phi$ is $d$(linear), we would also prove another vanishing theorem.
	\begin{theorem}
		Let $(X,\phi)$ be a complete $G_{2}$-manifold. If $\ast\phi$ is a $d$(linear) form, then $\mathcal{H}^{2}_{(2)}(X)=\{0\}$.
	\end{theorem}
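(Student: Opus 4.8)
The plan is to imitate the argument of Theorem~\ref{T4}, correcting for the fact that $L_{\ast\phi}$ is not injective on $\La^{2}(X)$. Fix $\a\in\mathcal{H}^{2}_{(2)}(X)$. First I would invoke Proposition~\ref{P1}(ii): the projections onto the two summands of $\La^{2}(X)=\La^{2}_{7}(X)\oplus\La^{2}_{14}(X)$ are built from $L_{\phi},\La_{\phi},L_{\ast\phi},\La_{\ast\phi}$, all of which commute with $\De$, so the splitting is preserved by the Laplacian and $\a=\a^{7}+\a^{14}$ with both components $L^{2}$-harmonic.

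Since $\phi$ is parallel, so is $\ast\phi$, and by hypothesis $\ast\phi$ is $d$(linear); hence Theorem~\ref{T1} gives $\ast\phi\wedge\a=0$. The relevant algebraic fact is that $\La^{2}_{14}(X)$ is exactly the kernel of $L_{\ast\phi}$ on $\La^{2}(X)$, whereas $L_{\ast\phi}$ is injective on $\La^{2}_{7}(X)$ (on the $7$-part the map $\b\mapsto\ast(\b\wedge\ast\phi)$ is, up to a nonzero scalar, the $G_{2}$-isomorphism $\La^{2}_{7}\cong\La^{1}$). Therefore $0=\ast\phi\wedge\a=\ast\phi\wedge\a^{7}$ forces $\a^{7}=0$, and we are reduced to showing $\a^{14}=0$.

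This reduction is where the genuine difficulty lies: since $\La^{2}_{14}\subset\ker L_{\ast\phi}$, the identity $\ast\phi\wedge\a^{14}=0$ holds automatically and carries no information, so the Cao--Xavier integration by parts behind Theorem~\ref{T1} never sees the $14$-component (this is precisely the non-injectivity flagged before the statement). To close this gap I would use that a torsion-free $G_{2}$-structure is Ricci-flat, so $X$ is complete, non-compact and of non-negative Ricci curvature; by the Remark after Theorem~\ref{VT1} this yields $\mathcal{H}^{2}_{(2)}(X)=\{0\}$, and in particular $\a^{14}=0$. (The same Remark gives $\mathcal{H}^{2}_{7;(2)}(X)\cong\mathcal{H}^{1}_{(2)}(X)=\{0\}$, reproving $\a^{7}=0$ without the growth hypothesis.) Combined with $\a^{7}=0$ this gives $\a=0$, i.e. $\mathcal{H}^{2}_{(2)}(X)=\{0\}$.

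I would regard the attempt to reach $\a^{14}$ directly from the growth of $\ast\phi$ as the main obstacle, and I expect it to fail without the curvature input. Concretely, once $\a=\a^{14}$ one has $\ast\a=-\a\wedge\phi$, whence $\|\a\|^{2}_{L^{2}(X)}=-\int_{X}\a\wedge\a\wedge\phi=-\int_{X}d\theta\wedge\ast(\a\wedge\a)$, where $\ast\phi=d\theta$ with $\theta$ of linear growth. Integrating by parts against the cut-offs $f_{j}$ of Theorem~\ref{T1}, the boundary term is dominated by $(j+1)\int_{B_{j+1}\backslash B_{j}}|\a|^{2}$ and vanishes along a subsequence exactly as before; but the interior term $\int_{X}\theta\wedge d\ast(\a\wedge\a)$ vanishes only when $\a\wedge\a$ is co-closed, which is not automatic for a harmonic $\a\in\La^{2}_{14}$, and the linear growth of $\theta$ prevents one from absorbing it without further control on $\na\a$. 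This is why I would ultimately settle the $14$-component through Ricci-flatness rather than through the wedge technique alone.
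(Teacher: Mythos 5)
Your treatment of the $7$-component is fine and agrees with the paper: $\ast\phi$ is parallel and $d$(linear), Theorem \ref{T1} gives $\a\wedge\ast\phi=0$, and since $\ker\big(L_{\ast\phi}|_{\La^{2}}\big)=\La^{2}_{14}$ while $L_{\ast\phi}$ is injective on $\La^{2}_{7}$, this forces $\a^{7}=0$. The genuine gap is in how you dispose of $\a^{14}$. You invoke the Remark after Theorem \ref{VT1} in the form ``complete, non-compact, non-negative Ricci curvature implies $\mathcal{H}^{2}_{(2)}(X)=\{0\}$''. That statement is false, and the ``$i=1,2$'' in that Remark can only be a typo for degrees $0,1$: non-negative Ricci curvature controls the Weitzenb\"ock term on functions and $1$-forms (whence $\mathcal{H}^{1}_{(2)}(X)=\{0\}$ and, via the parallel isomorphism $\La^{2}_{7}\cong\La^{1}$, also $\mathcal{H}^{2}_{7;(2)}(X)=\{0\}$), but on $2$-forms the Weitzenb\"ock curvature term involves the full curvature operator. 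Ricci-flat counterexamples exist: ALE spaces such as the Eguchi--Hanson space carry a non-trivial $L^{2}$ harmonic $2$-form (cf. \cite{Hit}), and the product of Eguchi--Hanson with a flat $T^{3}$ is a complete, non-compact, Ricci-flat $7$-manifold with torsion-free $G_{2}$-structure and $\mathcal{H}^{2}_{(2)}\neq\{0\}$. Note also the self-defeating consequence of your reading: if the Remark were true in degree $2$, then this theorem, Theorems \ref{T4} and \ref{T5}, and Theorem \ref{VT1} itself would all be trivial, with no $d$(linear) hypothesis needed. Since $\La^{2}_{14}\cong\mathfrak{g}_{2}$ is not isomorphic, as a parallel subbundle, to any bundle whose $L^{2}$ harmonic sections Ricci curvature controls, Ricci-flatness tells you nothing about $\a^{14}$, and your proof does not close.

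For what it is worth, your diagnosis of where the difficulty sits is exactly right, and the paper's own proof does not resolve it either: it derives precisely your identity $\b+\ast(\b\wedge\phi)=0$ from Theorem \ref{T1}, which by the paper's own definitions says $3\b^{7}=0$, i.e. $\b\in\La^{2}_{14}$, and then declares ``$\b\in\mathcal{H}^{2}_{7;(2)}(X)=\{0\}$'' --- mislabelling the type-$14$ condition as type $7$ and applying the (correct) vanishing of $\mathcal{H}^{2}_{7;(2)}$ to the wrong summand. Neither the paper nor your proposal supplies a valid argument that $\mathcal{H}^{2}_{14;(2)}(X)=\{0\}$ when only $\ast\phi$, rather than $\phi$, is assumed $d$(linear); indeed your closing paragraph correctly explains why the Cao--Xavier integration by parts cannot see that summand. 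So your attempt is incomplete at the same place the paper's is, and the statement you used to bridge the gap is false.
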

	\begin{proof}
		We denote $\a$ by a harmonic $L^{2}$-form of degree $2$. We also consider the form $\a\wedge\ast\phi$, following Theorem \ref{T1}, $\a\wedge\ast\phi=0$, i.e., $\a+\ast(\a\wedge\phi)=0$. On this time, the map $L_{\ast\phi}:\La^{2}(X)\rightarrow\La^{6}(X)$ is not injective. But $tr(\a\wedge\a)$ is closed $L^{1}$ form on $X$, following  Lemma \ref{L1}, $\|\a\|^{2}_{L^{2}(X)}=-\int_{X}tr(\a\wedge\a\wedge\phi)=0$, i.e., $\a=0$.
	\end{proof}
	\begin{example}
		Let $(X,\eta,\w)$ be a Sasakian-Einstein $5$-fold, $\eta$ is a contact $1$-form on $X$. The metric cone $C(X)$ is a Calabi-Yau manifold. There are  K\"{a}hler form $\w=d(\frac{1}{2}r^{2}\eta)$ and volume form $\Om\in\La^{3,0}(X)$ which satisfies $\na\Om=0$. Denoted by $Cyl(C(X))$ the cylinder over the Calabi-Yau manifold $C(X)$. We can use the $\w,\Om$ on the base $C(X)$ to define a $G_{2}$-structure:
		$$\phi=dt\wedge\w+Im\Om$$
		and
		$$\ast\phi=\frac{1}{2}\w^{2}+dt\wedge\ Re\Om.$$
		where the metric on $Cyl(C(X))$ is $dt^{2}+dr^{2}+r^{2}\rm{g}_{X}$. 
		Since $\ast\phi=d(\w\wedge\frac{1}{2}r^{2}\eta+tRe\Om)$ and $\rho(x_{0},x)=O((r^{2}+t^{2})^{1/2})$,\ then the $G_{2}$-manifold $Cyl(C(X))$ has a linear growth parallel form $\ast\phi$.
	\end{example}
	\subsection{$Spin(7)$-manifolds}
	\begin{definition}
		A $Spin(7)$-manifold is a $8$-manifold $X$ equipped with a torsion-free $Spin(7)$-structure $\Om$, that is $$\na_{\rm{g}_{\Om}}\Om=0,$$
		where $\rm{g}_{\Om}$ is the metric induced by $\Om$.
	\end{definition}
	Under the action of $Spin(7)$, the space $\La^{2}(X)$ splits into irreducible representations, as follows:
	\begin{equation}\label{E13}
	\La^{2}(X)=\La^{2}_{7}(X)\oplus\La^{2}_{21}(X).
	\end{equation}
	These summands can be characterized as follows:
	\begin{equation}\nonumber
	\begin{split}
	&\La^{2}_{7}(X)=\{\a\in\La^{2}(X)\mid\ast(\a\wedge\Om)=3\a\},\\
	&\La^{2}_{21}(X)=\{\a\in\La^{2}(X)\mid\ast(\a\wedge\Om)=-\a\}.\\
	\end{split}
	\end{equation}
	From the construction, it is clear that the splitting (\ref{E13}) can be obtained via the operator $L_{\Om}$, $\La_{\Om}$. By Proposition \ref{P1} these operators commute with the Laplacian. Therefore, the harmonic forms also split:
	\begin{equation}\nonumber
	\begin{split}
	&\mathcal{H}^{2}_{(2)}(X)=\mathcal{H}^{2}_{7;(2)}(X)\oplus\mathcal{H}^{2}_{21;(2)}(X).\\
	\end{split}
	\end{equation}
	\begin{example} 
		Let $(X,\phi)$ be a nearly parallel $G_{2}$-manifold (See \cite{Iva}). There is a $3$-form $\phi$ with $|\phi|^{2}=7$ such that
		$$d\phi=4\ast\phi.$$
		Then the Riemannian cone $\big{(}C(X),dr^{2}+r^{2}\rm{g}\big{)}$ is a $Spin(7)$-manifold with  $Spin(7)$-structure $\Om$ defined by
		$$\Om:=r^{3}dr\wedge\phi+r^{4}\ast\phi.$$
		Since $\phi=d(\frac{1}{4}r^{4}\phi)=d\b$ and $\rho(x_{0},x)=O(r)$, the Riemaniann cone $C(X)$ is also the model for the growth conditions required.
	\end{example}
	We will also show that the map $L_{\Om}:\La^{p}\rightarrow\La^{p+4}$ on the complete $Spin(7)$-manifold is injective for $p=0,1,2$.
	\begin{lemma}\label{L3}
		Let $(X,\Om)$ be a complete $Spin(7)$-manifold, for any $\a\in\La^{k}(X)$, $k=0,1,2$, satisfies the inequalities
		$$\|\a\|_{L^{2}(X)}\approx\|\a\wedge\Om\|_{L^{2}(X)},$$
		$$\langle\De\a,\a\rangle_{L^{2}(X)}\approx\langle\De(\a\wedge\Om),\a\wedge\Om\rangle_{L^{2}(X)}.$$
	\end{lemma}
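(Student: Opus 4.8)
The plan is to follow the proof of Lemma \ref{L2} verbatim in structure, treating the three degrees $k=0,1,2$ separately and in each case reducing both asserted equivalences to a pointwise algebraic identity for $\Om$ together with the commutation statement of Proposition \ref{P1}(ii). Throughout I use that $\Om$ is parallel, so $|\Om|^2$ is a positive constant, equal to $14$ on a $Spin(7)$-manifold, and that $\De$ commutes both with $L_\Om$ and with $\ast$.

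For $k=0$, if $\a,\b\in\La^{0}(X)$ then $(\a\wedge\Om)\wedge\ast(\b\wedge\Om)=14\,\a\b\ast1$ pointwise, so $\|\a\wedge\Om\|_{L^2(X)}^2=14\|\a\|_{L^2(X)}^2$; since $\De(\a\wedge\Om)=(\De\a)\wedge\Om$ by Proposition \ref{P1}(ii), the same constant yields $\langle\De(\a\wedge\Om),\a\wedge\Om\rangle=14\langle\De\a,\a\rangle$. For $k=1$ I would use the elementary identity $|\a\wedge\Om|^2+|\iota_{\a^\sharp}\Om|^2=|\a|^2|\Om|^2$, valid for any $1$-form, and evaluate the contraction term by a trace argument: summing $\sum_i|\iota_{e_i}\Om|^2=4|\Om|^2=56$ over an orthonormal coframe and invoking the irreducibility of $\La^{1}$ under $Spin(7)$ forces $|\iota_{\a^\sharp}\Om|^2=7|\a|^2$, whence $|\a\wedge\Om|^2=7|\a|^2$ pointwise. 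The Laplacian equivalence in degrees $0$ and $1$ then follows exactly as above from $\De L_\Om=L_\Om\De$.

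The degree $k=2$ case is where the representation (\ref{E13}) enters. Writing $\a=\a^{7}+\a^{21}$, the defining relations $\ast(\a^{7}\wedge\Om)=3\a^{7}$ and $\ast(\a^{21}\wedge\Om)=-\a^{21}$, together with $\ast\ast=1$ on $\La^{2}(X)$ (dimension $8$), give $\a\wedge\Om=\ast(3\a^{7}-\a^{21})$. Since $\ast$ is a pointwise isometry and the two summands are orthogonal, $\|\a\wedge\Om\|_{L^2(X)}^2=9\|\a^{7}\|_{L^2(X)}^2+\|\a^{21}\|_{L^2(X)}^2$, which is pinched between $\|\a\|_{L^2(X)}^2$ and $9\|\a\|_{L^2(X)}^2$. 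For the Laplacian, because $\De$ commutes with $\ast$ and, by Proposition \ref{P1}(ii), with the projections onto $\La^{2}_{7}(X)$ and $\La^{2}_{21}(X)$, it preserves the splitting; hence $\langle\De(\a\wedge\Om),\a\wedge\Om\rangle=9\langle\De\a^{7},\a^{7}\rangle+\langle\De\a^{21},\a^{21}\rangle$, again comparable to $\langle\De\a,\a\rangle$ with the same constants.

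The main obstacle is this degree $k=2$ Laplacian statement: one must be sure that $\De$ genuinely respects the $\La^{2}_{7}\oplus\La^{2}_{21}$ decomposition and that all cross terms drop, so that the constants $9$ and $1$ emerge cleanly. This is precisely what Proposition \ref{P1}(ii) supplies, once one checks that the projectors onto the two eigenspaces of $\a\mapsto\ast(\a\wedge\Om)$ are polynomials in $L_\Om$ and $\La_\Om$. A secondary point is the normalization constant in degree $1$; its exact value $7$ is inessential for the stated $\approx$, but its positivity is exactly what makes $L_\Om$ injective on $1$-forms, the property needed downstream (compare Theorem \ref{T4}).
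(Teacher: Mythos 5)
Your proof is correct, and its overall architecture is the same as the paper's: a case split over $k=0,1,2$, reduction of both equivalences to pointwise algebraic identities, and in degree $2$ the decomposition $\a\wedge\Om=\ast(3\a^{7}-\a^{21})$ combined with the commutation of $\De$ with $\ast$, $L_{\Om}$, $\La_{\Om}$. The one step where you take a genuinely different route is degree $1$, and it is worth recording: the paper asserts the identity $\ast(\a\wedge\Om)\wedge\Om=4\ast\a$, hence $\|\a\wedge\Om\|^{2}_{L^{2}(X)}=4\|\a\|^{2}_{L^{2}(X)}$, whereas you derive the constant from $|\a\wedge\Om|^{2}+|\iota_{\a^{\sharp}}\Om|^{2}=|\a|^{2}|\Om|^{2}$ plus the trace identity $\sum_{i}|\iota_{e_{i}}\Om|^{2}=4|\Om|^{2}=56$ and Schur's lemma, obtaining $|\a\wedge\Om|^{2}=7|\a|^{2}$. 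Your constant is the correct one: since $\La^{1}$ is irreducible under $Spin(7)$ (equivalently, $Spin(7)$ acts transitively on $S^{7}$), $|\iota_{e}\Om|^{2}$ equals $56/8=7$ on unit vectors, so $|\a\wedge\Om|^{2}=(14-7)|\a|^{2}$; one can also check this directly on $\a=dx^{8}$ with the standard Cayley form. The paper's value $4$ appears to be carried over from the $G_{2}$ case of Lemma \ref{L2}, where the same computation gives $\sum_{i}|\iota_{e_{i}}\phi|^{2}=3|\phi|^{2}=21$ and hence $|\a\wedge\phi|^{2}=(7-3)|\a|^{2}=4|\a|^{2}$, which is genuinely correct there but not for $Spin(7)$. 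The discrepancy is harmless for the lemma and everything downstream — only positivity of the constant is used, both for the stated $\approx$ and for the injectivity of $L_{\Om}$ on forms of degree $0,1,2$ needed in Theorem \ref{T5} — but your route produces the right normalization. Finally, your explicit check in degree $2$ that the projections onto the summands of (\ref{E13}) are affine functions of $\La_{\Om}L_{\Om}$, so that Proposition \ref{P1}(ii) makes $\De$ preserve the splitting and kills the cross terms $\langle\De\a^{7},\a^{21}\rangle_{L^{2}(X)}$, is precisely the justification the paper leaves implicit when it writes $\langle\De(\a\wedge\Om),\a\wedge\Om\rangle_{L^{2}(X)}=9\langle\De\a^{7},\a^{7}\rangle_{L^{2}(X)}+\langle\De\a^{21},\a^{21}\rangle_{L^{2}(X)}$.
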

	\begin{proof}
		Let $\a,\b\in\La^{0}(X)$,\ we observe that:
		$$(\a\wedge\Om)\wedge\ast(\b\wedge\Om)=14\a\b\ast1,$$
		then
		$$\|\a\|^{2}_{L^{2}(X)}=\frac{1}{14}\|\a\wedge\Om\|_{L^{2}(X)},\ \langle\De\a,\a\rangle_{L^{2}(X)}=\frac{1}{12}\langle\De(\a\wedge\Om),\a\wedge\Om\rangle_{L^{2}(X)}.$$
		Let $\a,\b\in\La^{1}(X)$, we also observe that:
		$$\ast(\a\wedge\Om)\wedge(\b\wedge\Om)=4\ast\a\wedge\b,$$
		here we use the fact $\ast(\a\wedge\Om)\wedge\Om=4\ast\a$. We take $\b=\a$, then
		$$\|\a\|^{2}_{L^{2}(X)}=\frac{1}{4}\|\a\wedge\Om\|^{2}_{L^{2}(X)},\ \langle\De\a,\a\rangle_{L^{2}(X)}=\frac{1}{4}\langle\De(\a\wedge\Om),\a\wedge\Om\rangle_{L^{2}(X)}.$$
		Let $\a\in\La^{2}(X)$, we write $\a=\a^{7}+\a^{21}$, then  $\a\wedge\Om=3\ast\a^{7}-\ast\a^{21}$. Hence
		\begin{equation}\nonumber
		\|\a\wedge\Om\|^{2}_{L^{2}(X)}=9\|\a^{7}\|^{2}_{L^{2}(X)}+\|\a^{21}\|^{2}_{L^{2}(X)}\approx\|\a\|^{2}_{L^{2}(X)}.
		\end{equation}
		Since $[\De,L_{\Om}]=0$, we have $\De(\a\wedge\Om)=\De\a\wedge\Om=\ast\De(3\a^{7}-\a^{21})$. Then
		\begin{equation}\nonumber
		\begin{split}
		\langle\De(\a\wedge\Om),\a\wedge\Om\rangle_{L^{2}(X)}&=\langle\ast\De(3\a^{7}-\a^{21}),\ast(3\a^{7}-\a^{21})\rangle_{L^{2}(X)}\\
		&=9\langle\De\a^{7},\a^{7}\rangle_{L^{2}(X)}+\langle\De\a^{21},\a^{21}\rangle_{L^{2}(X)}\\
		&\approx\langle\De\a,\a\rangle_{L^{2}(X)}.\\
		\end{split}
		\end{equation}
	\end{proof}
	\begin{theorem}\label{T5}
		Let $(X,\Om)$ be a complete $Spin(7)$-manifold with a $d$(linear) $Spin(7)$-structure. Then $\mathcal{H}^{k}_{(2)}(X)=\{0\}$ for $k=0,1,2$.
	\end{theorem}
	\begin{proof}
		We denote $\a$ by a harmonic $p$-form $\a$. Following the hypothesis of the structure form $\Om$ , we have $\a\wedge\Om=0$ (See Lemma \ref{L2}). Since $L_{\Om}:\La^{p}(X)\rightarrow\La^{p+4}(X)$ is injective for $p=0,1,2$ (See Lemma \ref{L3}), we have $\a\equiv0$.	
	\end{proof}
	\section{Gauge theory}
	\subsection{Instantons}
	We consider the instanton equation on the geometries discussed in the previous section. Let $E$ be a principal $G$-bundle over a complete Riemannian manifold $X$, with dimension $n$ and $A$ be a connection on bundle $E$ over $X$. The instanton equation on $X$ can be introduced as follows. Assume there is a $4$-form $Q$ on $X$. Then a $(n-4)$-form $\ast{Q}$ exists, where $\ast$ is the Hodge operator on $X$. A connection $A$ is called an anti-self-dual instanton, when it satisfies the instanton equation
	\begin{equation}\label{1.1}
	\ast F_{A}+\ast{Q}\wedge F_{A}=0
	\end{equation}
	When $n>4$,\ these equations can be defined on the manifold $X$ with a special holonomy group, i.e., the holonomy group $\rm{Hol}(X)$ of the Levi-Civita connection on the tangent bundle $TX$ is a subgroup of the group $SO(n)$. Each solution of equation (\ref{1.1}) satisfies the Yang-Mills equation. The instanton equation (\ref{1.1}) is also well-defined on a manifold $X$ with non-integrable $G$-structures, but equation (\ref{1.1}) implies the Yang-Mills equation will have torsion. For our purposes, $X$ is a $G_{2}$-manifold and $\ast Q$ is the $G_{2}$-structure $3$-form or $X$ is a $Spin(7)$-manifold and $\ast Q$ is the $Spin(7)$-structure $4$-from. At first, we prove a useful lemma 
	\begin{lemma}\label{L1}
		Let $(X^{n},\w)$ be a complete Riemannian $n$-manifold with a $d$(linear) $k$-form $\w$. Suppose that $\w$ is bounded. If $\a$ is a closed $L^{1}$ form of degree $n-k$, then
		$$\int_{X}\a\wedge\w=0.$$
	\end{lemma}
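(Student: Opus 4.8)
The plan is to mirror the cutoff argument used in the proof of Theorem \ref{T1}, replacing the role of the $L^{2}$-norm by the $L^{1}$-norm and using only the closedness of $\a$ together with the exactness and controlled growth of $\w$. Write $\w=d\b$ with $|\w|\leq c$ and $|\b(x)|\leq c(1+\rho(x_{0},x))$. Since $d\a=0$ and $\deg\a=n-k$, the Leibniz rule gives
$$\a\wedge\w=\a\wedge d\b=(-1)^{n-k}d(\a\wedge\b),$$
so that $\a\wedge\w$ is an exact $n$-form. Were $X$ compact, Stokes' theorem would finish the proof at once; the entire content of the lemma is the control of the boundary contribution at infinity.

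First I would reuse the compactly supported cutoffs $f_{j}(x)=\eta(\rho(x_{0},x)-j)$ from the proof of Theorem \ref{T1}, with $0\leq f_{j}\leq1$, $f_{j}\equiv1$ on $B_{j}$, $\mathrm{supp}(df_{j})\subset B_{j+1}\backslash B_{j}$, and $|df_{j}|$ bounded uniformly in $j$ (the distance function has unit gradient). Because $|\a\wedge\w|\leq c|\a|$ and $\a\in L^{1}(X)$, the form $\a\wedge\w$ is integrable, so the dominated convergence theorem yields
$$\lim_{j\rightarrow\infty}\int_{X}f_{j}\,\a\wedge\w=\int_{X}\a\wedge\w.$$
On the other hand, since $f_{j}\,\a\wedge\b$ has compact support, Stokes' theorem gives $\int_{X}d(f_{j}\,\a\wedge\b)=0$; expanding $d(f_{j}\,\a\wedge\b)=df_{j}\wedge\a\wedge\b+f_{j}\,d(\a\wedge\b)$ and inserting the identity above transfers the exterior derivative onto the cutoff:
$$\int_{X}f_{j}\,\a\wedge\w=(-1)^{n-k+1}\int_{X}df_{j}\wedge\a\wedge\b.$$

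The main obstacle is the boundary term on the right. Since $df_{j}$ is supported in the annulus $B_{j+1}\backslash B_{j}$, where $|\b(x)|\leq c(1+\rho(x_{0},x))\leq c(j+2)$, and $|df_{j}|$ is bounded independently of $j$, I obtain
$$\left|\int_{X}df_{j}\wedge\a\wedge\b\right|\leq C(j+1)\int_{B_{j+1}\backslash B_{j}}|\a(x)|\,dx.$$
Here the $L^{1}$-integrability is decisive, exactly as in the proof of Theorem \ref{T1}: if no subsequence made this quantity vanish, there would be $a>0$ with $(j+1)\int_{B_{j+1}\backslash B_{j}}|\a|\geq a$ for all large $j$, whence $\int_{X}|\a|\geq a\sum_{j}\frac{1}{j+1}=+\infty$, contradicting $\a\in L^{1}(X)$. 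Thus there is a subsequence $\{j_{i}\}$ along which the boundary term tends to $0$. Combining this with the two displayed identities forces $\int_{X}\a\wedge\w=0$, which is the claim. The linear growth of $\b$ is precisely balanced against the $L^{1}$ weight coming from the divergence of the harmonic series, so no decay of $\a$ beyond integrability is needed.
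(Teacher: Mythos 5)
Your proof is correct and is essentially the same argument as the paper's: the same cutoffs $f_{j}$, dominated convergence for $\int f_{j}\,\a\wedge\w$, the annulus bound $(j+1)C\int_{B_{j+1}\backslash B_{j}}|\a|$, and the harmonic-series subsequence trick exploiting $\a\in L^{1}$. The only cosmetic difference is that you integrate by parts via Stokes' theorem on $d(f_{j}\,\a\wedge\b)$, whereas the paper writes the identical computation as $L^{2}$ pairings with Hodge stars, $\langle f_{j}\a,\ast d\b\rangle=\langle d(f_{j}\a),\ast\b\rangle$.
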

	\begin{proof}
		Let $\a$ be a closed $(n-k)$-form in $L^{1}$, and noticing that $f_{j}$ is as the cutoff function in the proof of Theorem \ref{T1}, one has
		\begin{equation}\label{E7}
		\begin{split}
		\langle f_{j}\a,\ast\w\rangle_{L^{2}(X)}
		&=\langle f_{j}\a,\ast d\b\rangle_{L^{2}(X)}\\
		&=(\pm)\langle d(f_{j}\a),\ast\b\rangle_{L^{2}(X)}\\
		&=(\pm)\big{(}\langle df_{j}\wedge\a,\ast\b\rangle_{L^{2}(X)}+\langle f_{j}d\a,\ast\b\rangle_{L^{2}(X)}\big{)}\\
		&=(\pm)\langle df_{j}\wedge\a,\ast\b\rangle_{L^{2}(X)}.\\
		\end{split}
		\end{equation}
		Since $0\leq f_{j}\leq 1$ and $\lim_{j\rightarrow\infty}f_{j}(x)\a(x)=\a(x)$, it follows from
		the dominated convergence theorem that
		\begin{equation}\label{E10}
		\lim_{j\rightarrow\infty}\langle f_{j}\a,\ast\w\rangle_{L^{2}(X)}=\int_{X}\a\wedge\w.
		\end{equation}
		Since $\w$ is bounded,\ $supp(df_{j})\subset B_{j+1}\backslash B_{j}$ and $|\b(x)|=O(\rho(x_{0},x))$, one obtains
		\begin{equation}\label{E8}
		|\langle df_{j}\wedge\a,\ast\b\rangle_{L^{2}(X)}|\leq (j+1)C\int_{B_{j+1}\backslash B_{j}}|\a(x)|dx,
		\end{equation}
		where $C$ is a constant independent of $j$. Using the similar proof in Theorem \ref{T1}, we can proof that there exists a subsequence $\{j_{i}\}_{i\geq1}$ such that
		\begin{equation}\label{E9}
		\lim_{i\rightarrow\infty}(j_{i}+1)C\int_{B_{j_{i}+1}\backslash B_{j_{i}}}|\a(x)|dx=0.
		\end{equation}
		It now follows from (\ref{E7}), (\ref{E10}) and (\ref{E9}) that $\int_{X}\a\wedge\w=0$.
	\end{proof}
	\begin{corollary}\label{C2}
		Let $(X^{n},\w)$ be a complete Riemanniann manifold with a $d$(linear) $(n-4)$-form $\w$, $E$ be a principal $G$-bundle on $X$ and $A$ be a smooth connection on $E$. Suppose that $\w$ is bounded. If the curvature $F_{A}$ is in $L^{2}$, then
		$$\int_{X}tr(F_{A}\wedge F_{A})\wedge\w=0.$$
	\end{corollary}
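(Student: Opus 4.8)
The plan is to deduce this directly from Lemma \ref{L1}, applied to the scalar-valued $4$-form $\a = tr(F_{A}\wedge F_{A})$. Since $\w$ is a $d$(linear) $(n-4)$-form, taking $k = n-4$ in Lemma \ref{L1} gives $n-k = 4$, so the hypothesis of that lemma asks for precisely a closed $4$-form lying in $L^{1}(X)$, and its conclusion $\int_{X}\a\wedge\w = 0$ is exactly the asserted identity. It therefore suffices to verify that $\a$ is closed and that $\a\in L^{1}(X)$.

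First I would check that $\a$ is closed. This is the standard Chern--Weil fact: because the trace is $Ad$-invariant, differentiating under it gives $d\,tr(F_{A}\wedge F_{A}) = 2\,tr(d_{A}F_{A}\wedge F_{A})$, and the Bianchi identity $d_{A}F_{A}=0$ forces the right-hand side to vanish. Hence $tr(F_{A}\wedge F_{A})$ is a genuine closed $4$-form. Second I would check integrability: pointwise one has the bound $|tr(F_{A}\wedge F_{A})|\lesssim|F_{A}|^{2}$, with a constant depending only on the structure group, so $\int_{X}|\a|\lesssim\|F_{A}\|^{2}_{L^{2}(X)}<\infty$ by the assumption $F_{A}\in L^{2}(X)$. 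Thus $\a$ is an $L^{1}$ closed $4$-form, the degrees match the $(n-4)$-form $\w$, and Lemma \ref{L1} applies verbatim.

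There is no genuine obstacle here, as the statement is a formal consequence of Lemma \ref{L1}; the whole content lies in that lemma, whose proof has already carried out the cutoff argument controlling the boundary terms over $B_{j+1}\backslash B_{j}$. The only two points requiring any attention are the invariance-of-trace computation showing $tr(F_{A}\wedge F_{A})$ is closed and the elementary observation that $F_{A}\in L^{2}(X)$ promotes $tr(F_{A}\wedge F_{A})$ into $L^{1}(X)$; both are routine and need no further estimate.
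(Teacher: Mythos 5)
Your proposal is correct and follows essentially the same route as the paper: both apply Lemma \ref{L1} to the closed $4$-form $tr(F_{A}\wedge F_{A})$, using the Bianchi identity $d_{A}F_{A}=0$ for closedness and $F_{A}\in L^{2}(X)$ for the $L^{1}$ hypothesis. Your write-up is in fact slightly more careful than the paper's, which leaves the pointwise bound $|tr(F_{A}\wedge F_{A})|\lesssim|F_{A}|^{2}$ implicit.
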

	\begin{proof}
		From the Bianchi identity $d_{A}F_{A}=0$, we have 
		$$dtr(F_{A}\wedge F_{A})=tr(d_{A}(F_{A}\wedge F_{A}))=0.$$
		Thus $dtr(F_{A}\wedge F_{A})$ is an $L^{1}$ closed form. Following Lemma \ref{L1}, we can complete the proof of this Corollary.
	\end{proof}
	We then have a vanishing theorem on the $G_{2}$- (or $Spin(7)$-) instantons  over a complete manifold with $d$(linear) structure form.
	\begin{theorem}\label{T4.3}
		Let $X$ be a complete $G_{2}$-(or $Spin(7)$-) manifold with a $d$(linear) $G_{2}$- (or $Spin(7)$-) structure $\phi$ (or $\Om$), $E$ be a $G$-bundle on $X$ and $A$ be a smooth connection on $E$. If the connection $A$ is a $G_{2}$- (or $Spin(7)$-) instanton with square integrable curvature $F_{A}$, then $A$ is a flat connection.
	\end{theorem}
	\begin{proof}
		By the hypothesis of the connection $A$, the Yang-Mills  energy functional on a complete $G_{2}$-manifold is
		$$YM(A)=\int_{X}tr(F_{A}\wedge F_{A})\wedge\phi.$$ 
		Following Corollary \ref{C2}, we obtain $YM(A)=0$, i.e. $F_{A}\equiv0$.  
	\end{proof}

	Let $(X,\rm{g})$ be a real Killing spinor compact manifold of dimension $n$, i.e., there are $3$-form $P$ and $4$-form $Q$ which satisfy
	$$dP=4Q,\ d\ast_{X}Q=(n-3)\ast_{X}P,$$
	where $\ast_{X}$ is the Hodge star operator on $X$. For $n>3$, the Chern-Simons functional can then be written as
	\begin{equation}\label{E4.6}
	CS(A)=-\frac{1}{2(n-3)}\int_{X}tr(F_{A}\wedge F_{A})\wedge\ast_{X}Q,
	\end{equation}
	which is gauge-invariant. We consider the cylinder $Cyl(X):=\mathbb{R}\times X$ over $X$. Then we can define a $4$-form $\Om$ on $Cyl(X)$ as
	$$\Om=dt\wedge P+Q,$$
	with $t$ the linear coordinate on $\mathbb{R}$. Let $\mathcal{A}$ be a gauge field on $Cyl(X)$ with the property that $dt\lrcorner A$, which is simply a choice of gauge. The instanton equation on the cylinder splits into the two equations
	\begin{equation}\label{E4.13}
	\begin{split}
	&\ast_{X}\frac{\pa A}{\pa t}=-\ast_{X}P\wedge F_{A},\\
	&\ast_{X}F_{A}=\ast_{X}Q\wedge F_{A}-\frac{\pa A}{\pa t}\wedge\ast_{X}P.\\
	\end{split}
	\end{equation}
	The  gradient flow of Chern-Simons functional (\ref{E4.6}) is equivalent to the first of equations (\ref{E4.13}). We denote $\ast$ by the Hodge star operator on $Cyl(X)$, $D$ by the exterior derivative on $T^{\ast}(Cyl(X))$. We also denote $\tilde{P}=dt\wedge P$, $\tilde{Q}=dt\wedge Q$. Then the forms $\tilde{P}$, $\tilde{Q}$ satisfy 
	$$\ast\tilde{P}=\ast_{X}P,\  \ast\tilde{Q}=\ast_{X}Q,$$
	and
	$$D\tilde{P}=4\tilde{Q},\ D\ast\tilde{Q}=(n-3)\ast\tilde{P}.$$
	The Yang-Mills energy function is 
	\begin{equation}\nonumber
	\begin{split}
	YM(\mathcal{A}):&=\|F_{\mathcal{A}}\|^{2}_{L^{2}(Cyl(X))}=-\int_{\mathbb{R}\times X}tr(F_{\mathcal{A}}\wedge F_{\mathcal{A}})\wedge\ast\Om\\
	&=-\int_{\mathbb{R}\times X}tr(F_{\mathcal{A}}^{2})\wedge\ast\tilde{P}-\int_{\mathbb{R}\times X}tr(F_{\mathcal{A}}^{2})\wedge\ast_{X}Q\wedge dt.\\
	\end{split}
	\end{equation}
	We observe that 
	$$-\int_{\mathbb{R}\times X}tr(F_{\mathcal{A}}^{2})\wedge\ast\tilde{P}=-\frac{1}{n-3}\int_{\mathbb{R}\times X}tr(F_{\mathcal{A}}^{2})\wedge D\ast\tilde{Q}.$$ 
	We also observe that
	$$F_{\mathcal{A}}=\frac{\pa A}{\pa t}\wedge dt+F_{A}$$
	and
	$$-tr(F_{\mathcal{A}}^{2})\wedge\ast\tilde{P}=-2tr(\frac{\pa A}{\pa t}\wedge dt\wedge F_{A})\wedge\ast_{X}P=2|\frac{\pa A}{\pa t}|^{2}dt\wedge dvol,$$
	here we use the first equation on (\ref{E4.13}). Thus
	\begin{equation}\label{E4.12}
	-\int_{\mathbb{R}\times X}tr(F_{\mathcal{A}}^{2})\wedge\ast\tilde{P}=2\int_{\mathbb{R}\times X}|\frac{\pa A}{\pa t}|^{2}dt\wedge dvol.
	\end{equation}
	In \cite{Hua}, the author proved a vanishing theorem as follows:
	\begin{theorem}( \cite{Hua} Theorem 1.2)
		If the connection $\mathcal{A}$ is a solution of $\Om$-instanton equation with square integrable curvature $F_{\mathcal{A}}$ over $Cyl(X)$, where $X$ is a compact Real Killing spinor manifold, then $\mathcal{A}$ is flat.
	\end{theorem}
	\begin{proof}
		If $F_{\mathcal{A}}$ is in $L^{2}(Cyl(X))$, following the Corollary \ref{C2}, then it implies that
		$$\int_{\mathbb{R}\times X }tr(F_{\mathcal{A}}^{2}\wedge\ast\tilde{P})=0.$$ Combining Equation (\ref{E4.12})  gives $\frac{\pa A}{\pa t}=0$, i.e., the connection $\mathcal{A}$ is not dependence on parameter $t$. Thus the Yang-Mills functional $YM(\mathcal{A})=\int_{\mathbb{R}}dt\int_{X}|F_{A}|^{2}dvol$ is finite if only if $F_{A}=0$. 	
	\end{proof}
	In this article, we will show that if the standard Yang-Mills functional on $Cyl(X)$ satisfies some mild conditions, then the solution of $\Om$-instanton equation is trivial. One also can see Section 4 on \cite{Hua2}. We define the energy density $\rho(\mathcal{A})$ by
	$$\rho(\mathcal{A}):=\lim_{T\rightarrow\infty}\frac{1}{2T}\int_{(-T,T)\times X}|F_{\mathcal{A}}|^{2}dvol_{\rm{g}}dt.$$
	\begin{lemma}\label{L4.5}(\cite{Hua2} Lemma 4.2 )
		Let $X$ be a complete manifold of dimension $n$ with a $d$(bounded) $k$-form $\w$, i.e.. there exist a $(k-1)$-form $\theta$ such that $\w=d\theta$, $\a$ be a closed from of degree $n-k$. If $\a$ satisfies
		\begin{equation}\label{E4.7}
		\lim_{r\rightarrow\infty}\frac{1}{r}\int_{B_{r}(x_{0})}|\a|dvol=0,
		\end{equation} 
		where $x_{0}$ is a point on $X$, $B_{r}(x_{0})$ is a geodesic ball, then there exists a sequence $\{j_{i}\}_{i\geq1}$ such that
		$$\lim_{i\rightarrow\infty}\int_{B_{j_{i}}(x_{0})}\a\wedge\w=0.$$
	\end{lemma}
	\begin{proof}
		We denote $f_{j}$ by the cutoff function in the proof of Theorem \ref{T1}.  We consider the form $\b:=\a\wedge\w=d(\a\wedge\theta)$. We have $f_{j}\b=d(f_{j}\a\wedge\theta)-df_{j}\wedge(\a\wedge\theta)$. By Stokes formula, we obtain
		\begin{equation*}
		|\int_{X}f_{j}\b|=|\int_{X}df_{j}\wedge(\a\wedge\theta)|\lesssim\int_{B_{j+1}\backslash B_{j}}|\a|
		\end{equation*}
		and
		\begin{equation*}
		|\int_{B_{j}}\b|\leq|\int_{X}f_{j}\b|+\int_{B_{j+1}\backslash B_{j}}|\b|\lesssim|\int_{X}f_{j}\b|+\int_{B_{j+1}\backslash B_{j}}|\a|
		\end{equation*}
		Thus
		\begin{equation}\label{E4.9}
		|\int_{B_{j}}\b|\lesssim\int_{B_{j+1}\backslash B_{j}}|\a|.
		\end{equation}
		By the hypothesis (\ref{E4.7}), there exists a subsequence $\{j_{i}\}_{i\geq1}$ such that
		\begin{equation}\label{E4.10}
		\lim_{i\rightarrow\infty}\int_{B_{j_{i}+1}\backslash B_{j_{i}}}|\a|=0.
		\end{equation}
		It now follow (\ref{E4.9}), (\ref{E4.10}) that $\lim_{i\rightarrow\infty}\int_{B_{j_{i}}(x_{0})}\a\wedge\w=0$.
	\end{proof}
	We then have
	\begin{theorem}\label{T4.6}( \cite{Hua2} Theorem 4.3)
		Let $Cyl(X)$ be the cylinder over a compact real Killing spinor manifold $X$, $\mathcal{A}$ be a solution of $\Om$-instanton equation. If $\rho(\mathcal{A})=0$, then $\mathcal{A}$ is a flat connection.
	\end{theorem}
	\begin{proof}
		Since $\rho(\mathcal{A})=0$ and $|Tr(F_{\mathcal{A}}^{2})|\lesssim |F_{\mathcal{A}}|^{2}$, we observe that
		\begin{equation}\label{E4.1}
		\lim_{T\rightarrow\infty}\frac{1}{T}\int_{(-T,T)\times X}|Tr(F_{\mathcal{A}}^{2})|=0.
		\end{equation} 
		Since $Tr(F_{\mathcal{A}}^{2})$ is a closed $4$-form on $Cyl(X)$, it also satisfies Equation (\ref{E4.1}) and $\ast\tilde{P}$ is a $D$(bounded) $(n-4)$-form, then following Lemma \ref{L4.5}, there exist a sequence $\{j_{i}\}_{i\geq1}$ such that
		\begin{equation}\label{E4.11}
		\lim_{i\rightarrow\infty}\int_{(-j_{i},j_{i})\times X}tr(F_{\mathcal{A}}^{2})\wedge\ast\tilde{P}=0.
		\end{equation}
		It now follows (\ref{E4.11}), (\ref{E4.12}) that 
		$$\lim_{i\rightarrow\infty}\int_{(-j_{i},j_{i})\times X}|\frac{\pa A}{\pa t}|^{2}dt\wedge dvol=0,$$
		i.e., $\frac{\pa A}{\pa t}=0$. The connection $\mathcal{A}$ is not dependence on parameter $t$. Thus $$\rho(\mathcal{A})=\int_{X}|F_{A}|^{2}dvol.$$
		By the hypothesis of energy density $\rho(\mathcal{A})$, we obtain that $F_{A}=0$. We complete this proof.
	\end{proof}
	\subsection{Hodge theory on bundle $E$}
	In this section, we consider the Hodge theory on principal bundle over the complete $G_{2}$-manifold equipped with a $d$(linear) $G_{2}$-structure. At first, we recall some definitions on differential geometry. Let $E$ be a principal $G$-bundle over a complete Riemannian manifold $X$. Assume now that $d_{A}$ is a smooth connection on $E$. The formal adjoint operator of $d_{A}$ acting on $\La^{p}(X,E):=\La^{p}(X)\otimes E$ is $d^{\ast}_{A}=(\pm)\ast d_{A}\ast$.
	\begin{definition}\label{D4.5}
		The Laplace-Beltrami operator associated to $d_{A}$ is the second order operator $\De_{A}:=d_{A}d_{A}^{\ast}+d_{A}^{\ast}d_{A}$. The space of $L^{2}$-harmonic forms of degree of $p$ respect to the Laplace-Beltrami operator $\De_{A}$ is defined by
		$$H^{p}_{(2)}(X,E)=\{\a\in\La^{p}_{(2)}(X,E):\De_{A}\a=0\}.$$
	\end{definition}
	\begin{proposition}\label{P4.6}
		Let $(X,\w)$ be a complete Riemannian manifold equipped with a nonzero parallel $k$-form $\w$, $E$ be a principal $G$-bundle over $X$ and $A$ be a smooth connection on $E$. If $\w$ is $d$(linear), then $$H^{0}_{(2)}(X,E)=\{0\}.$$
		Furthermore, if the Ricci curvature is flat, $H^{1}_{(2)}(X,E)=\{0\}$.
	\end{proposition}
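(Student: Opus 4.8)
The plan is to mimic the proof of Theorem \ref{T1}, replacing the de Rham differential $d$ by the covariant exterior derivative $d_{A}$ and exploiting that an $L^{2}$ harmonic section is $d_{A}$-parallel; the conclusion is then the bundle analogue of Corollary \ref{C1}. First I would reduce to a parallel section. If $\sigma\in H^{0}_{(2)}(X,E)$, then $\De_{A}\sigma=d_{A}^{\ast}d_{A}\sigma=0$, and integrating this against $\sigma$ over the compactly supported cutoffs $f_{j}$ (exactly the reduction to ``closed and co-closed'' recorded before Theorem \ref{T1}, now for $E$-valued forms on the complete manifold $X$) yields $\|d_{A}\sigma\|_{L^{2}(X)}=0$, hence $d_{A}\sigma=0$.

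Next I would verify that $\w\wedge\sigma$ is $d_{A}$-co-closed. Since $\w$ is parallel it is closed and co-closed, so $d(\ast\w)=0$; because $\sigma$ is a $0$-form one has $\ast(\w\wedge\sigma)=(\ast\w)\otimes\sigma$, and therefore $d_{A}\ast(\w\wedge\sigma)=(d\ast\w)\otimes\sigma+(-1)^{n-k}\ast\w\wedge d_{A}\sigma=0$, i.e. $d_{A}^{\ast}(\w\wedge\sigma)=0$. With $\b$ the primitive $\w=d\b$ satisfying $|\b(x)|=O(\rho(x_{0},x))$, I set $\nu=\b\wedge\sigma$ and note $d_{A}\nu=d\b\wedge\sigma+(-1)^{k-1}\b\wedge d_{A}\sigma=\w\wedge\sigma$. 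The co-closedness above then gives
$$0=\langle d_{A}^{\ast}(\w\wedge\sigma),f_{j}\nu\rangle_{L^{2}(X)}=\langle\w\wedge\sigma,f_{j}\,\w\wedge\sigma\rangle_{L^{2}(X)}+\langle\w\wedge\sigma,df_{j}\wedge\b\wedge\sigma\rangle_{L^{2}(X)}.$$
Just as in Theorem \ref{T1}, the first term converges to $\|\w\wedge\sigma\|^{2}_{L^{2}(X)}$ by dominated convergence, while the second is bounded by $(j+1)C\int_{B_{j+1}\backslash B_{j}}|\sigma(x)|^{2}dx$ and vanishes along a suitable subsequence because $\sigma\in L^{2}$; hence $\w\wedge\sigma=0$. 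Finally, since $\w$ is parallel and non-zero, $|\w|$ is a positive constant and $|\w\wedge\sigma|=|\w|\,|\sigma|$ pointwise, so $\w\wedge\sigma=0$ forces $\sigma\equiv0$, giving $H^{0}_{(2)}(X,E)=0$.

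The main obstacle is the co-closedness step. Unlike the scalar situation, Proposition \ref{P1} and its corollary do not apply to $E$-valued forms, so I cannot simply quote that $\w\wedge\sigma$ is $\De_{A}$-harmonic; the identity $d_{A}^{\ast}(\w\wedge\sigma)=0$ must instead be produced by hand from $\na\w=0$ (giving $d\ast\w=0$) together with $d_{A}\sigma=0$. Once this cancellation is secured, the remaining estimates are verbatim those of Theorem \ref{T1} and Corollary \ref{C1}, and no hypothesis on $A$ beyond smoothness is needed.
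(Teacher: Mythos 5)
Your proof is correct, but it takes a genuinely different route from the paper's. The paper argues at the scalar level: since $\De_{A}=\na_{A}^{\ast}\na_{A}$ on $0$-forms (Weitzenb\"{o}ck formula), an $L^{2}$ harmonic section $\a$ satisfies $\na_{A}\a=0$; the Kato inequality $|\na|\a||\leq|\na_{A}\a|$ then makes $|\a|$ an $L^{2}$ harmonic \emph{function} on $X$, which vanishes by the scalar Corollary \ref{C1}. You instead stay at the bundle level and re-run the Cao--Xavier cutoff argument of Theorem \ref{T1} for $E$-valued forms: you first extract $d_{A}\sigma=0$ by a cutoff integration by parts, then establish $d_{A}^{\ast}(\w\wedge\sigma)=0$ by hand from $d\ast\w=0$ and $d_{A}\sigma=0$ --- correctly identifying that this step cannot be quoted from Proposition \ref{P1}, whose supercommutator identities are not available for bundle-valued forms --- and finally conclude $\w\wedge\sigma=0$, hence $\sigma\equiv0$ because $|\w|$ is a non-zero constant. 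Both arguments need the same completeness/cutoff input (the paper's pairing $0=\langle d_{A}^{\ast}d_{A}\a,\a\rangle_{L^{2}(X)}=\|\na_{A}\a\|^{2}_{L^{2}(X)}$ also requires it implicitly). What each buys: the paper's reduction via Kato is shorter and reuses the already-proved scalar vanishing; your version is self-contained, avoids the Kato inequality entirely, and shows that for $0$-forms the $d$(linear) cutoff technique generalizes verbatim to the twisted setting once the co-closedness of $\w\wedge\sigma$ is supplied directly.
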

	\begin{proof}
		For any $\a\in H^{0}_{(2)}(X,E)$, the Weitzenb\"{o}ck formula gives:
		$$0=\langle d_{A}^{\ast}d_{A}\a,\a\rangle_{L^{2}(X)}=\langle\na_{A}^{\ast}\na_{A}\a,\a\rangle_{L^{2}(X)}=\|\na_{A}\a\|^{2}_{L^{2}(X)}.$$
		Using the Kato inequality,\ $|\na|\a||\leq|\na_{A}\a|$, we have
		$|\na|\a||=0$, i.e., $|\a|$ is a  harmonic function over $X$. Then following Corollary \ref{C1}, $|\a|\equiv0$, i.e. $\a\equiv0$.
		
		Next, we will show that if Ricci curvature is flat, $H^{1}_{(2)}(X,E)=\{0\}$. For any $\a\in H^{1}_{(2)}(X,E)$, the Weitzenb\"{o}ck formula gives:
		$$
		0=\langle \De_{A}\a,\a\rangle_{L^{2}(X)}=\langle\na_{A}^{\ast}\na_{A}\a,\a \rangle_{L^{2}(X)}=\|\na_{A}\a\|^{2}_{L^{2}(X)},
		$$
		here we use the fact the connection $A$ is flat. By Kato inequality, $\na|\a|=0$,i.e., $|\a|$ is also a harmonic function over $X$. Thus $\a\equiv 0$.
	\end{proof}
	The operator $\De_{A}$ always does not commute with $L_{\w}$, where $\w$ is parallel form on a complete manifold $X$. We cannot extend the idea of Theorem \ref{T1} to the principal bundle $E$. But on a complete $G_{2}$-manifold $X$, there exists a structure operator $C$ on $X$ (See Definition \ref{D2.1}). Then C induces isomorphisms $\La^{1}(X,E)\rightarrow\La^{2}_{7}(X,E)$. To be more specific, we can compose $\a=\a^{7}+\a^{14}$ for any $\a\in\La^{2}(X,E)$, $\a^{i}\in\La^{2}_{i}\otimes E$. There exists a one-form $\b$ such that
	\begin{equation}\label{3.7}
	C(\b):=\ast(\ast\phi\wedge\b)=\a^{7},\ i.e.,\ \b=\frac{1}{3}\big{(}\ast(\a^{7}\wedge\ast\phi)\big{)}.
	\end{equation}
	\begin{lemma}\label{3.9}
		Let $A$ be a connection on a complete $G_{2}$-manifold, $\a$ be a harmonic  $2$-form with respect to $\De_{A}$. If $X$ is non-compact, suppose also that $\a\in L^{2}$, then we have following identities:
		\begin{equation}\label{3.10}
		d^{\ast}_{A}\b=0,\ \Pi^{2}_{7}(d_{A}\b)=0.
		\end{equation}
		where $\b$ is defined as (\ref{3.7}) and $\Pi^{2}_{7}$ denote a projection map $\La^{2}\rightarrow\La^{2}_{7}$. Further more, if $A$ is a flat connection on $X$, then $\b$ is also closed with respect to $d_{A}$.
	\end{lemma}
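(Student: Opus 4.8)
The plan is to reduce everything to three ingredients: the fact that $\a$, being $L^{2}$ and $\De_{A}$-harmonic on a complete manifold, is both $d_{A}$-closed and $d^{\ast}_{A}$-closed (the twisted Gaffney theorem, $0=\langle\De_{A}\a,\a\rangle=\|d_{A}\a\|^{2}+\|d^{\ast}_{A}\a\|^{2}$); the parallelism $d\phi=0$ and $d(\ast\phi)=0$; and the standard pointwise $G_{2}$ identity $\La^{2}_{14}(X)=\{\gamma:\gamma\wedge\ast\phi=0\}$. First I would record the two equivalent forms of the defining relation (\ref{3.7}), namely $\ast\phi\wedge\b=\ast\a^{7}$ and $\ast\b=\frac{1}{3}\a^{7}\wedge\ast\phi$ (apply $\ast$ and use $\ast\ast=1$ in the relevant degrees); these are the forms on which $d_{A}$ will act.

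For $d^{\ast}_{A}\b=0$ I would compute $d^{\ast}_{A}\b=-\ast d_{A}\ast\b=-\frac{1}{3}\ast(d_{A}\a^{7}\wedge\ast\phi)$, using $d(\ast\phi)=0$. Since $d_{A}\a=0$ we have $d_{A}\a^{7}=-d_{A}\a^{14}$, and $d_{A}\a^{14}\wedge\ast\phi=d_{A}(\a^{14}\wedge\ast\phi)=0$ because $\a^{14}\wedge\ast\phi=0$. Hence $d_{A}\a^{7}\wedge\ast\phi=0$ and $d^{\ast}_{A}\b=0$.

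The point at which the argument becomes informative is the auxiliary identity $d^{\ast}_{A}\a^{7}=0$, which I would prove directly (note that $\a^{7}$ is \emph{not} separately $\De_{A}$-harmonic when $A$ is not flat). Writing $\a^{7}=\frac{1}{3}(\a+\ast(\a\wedge\phi))$, the first term gives $d^{\ast}_{A}\a=0$, while for the second $d^{\ast}_{A}\ast(\a\wedge\phi)=\pm\ast d_{A}(\a\wedge\phi)=\pm\ast(d_{A}\a\wedge\phi)=0$, using $d_{A}\a=0$, $d\phi=0$ and $\ast\ast=1$. Granting this, I differentiate $\ast\phi\wedge\b=\ast\a^{7}$ to get $\ast\phi\wedge d_{A}\b=d_{A}\ast\a^{7}=\pm\ast d^{\ast}_{A}\a^{7}=0$, so $d_{A}\b\wedge\ast\phi=0$; by the characterization of $\La^{2}_{14}(X)$ this is exactly $\Pi^{2}_{7}(d_{A}\b)=0$.

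For the flat case I would upgrade $\b$ to a harmonic form. When $F_{A}=0$ we have $d_{A}^{2}=0$, so the generalized K\"ahler identities of Proposition \ref{P1} hold verbatim with $d,d^{\ast}$ replaced by $d_{A},d^{\ast}_{A}$; in particular $\De_{A}$ commutes with $L_{\phi}$ and with $\ast$, hence with $\Pi^{2}_{7}$ and with the structure operator $C$. Then $\a^{7}=\Pi^{2}_{7}\a$ is $L^{2}$ and $\De_{A}$-harmonic, and $C\De_{A}\b=\De_{A}C\b=\De_{A}\a^{7}=0$ together with injectivity of $C\colon\La^{1}(X)\to\La^{2}_{7}(X)$ (Lemma \ref{L2}) gives $\De_{A}\b=0$; completeness then forces $d_{A}\b=0$. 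The main obstacle, and the reason flatness enters only here, is precisely that $\De_{A}$ fails to commute with the $G_{2}$-operators once $F_{A}\neq0$ (the curvature term in the Weitzenb\"ock formula does not commute with $L_{\phi}$), so in the first two identities one cannot treat $\a^{7}$ as harmonic and must instead extract the weaker but sufficient facts $d_{A}\a^{7}\wedge\ast\phi=0$ and $d^{\ast}_{A}\a^{7}=0$ by hand.
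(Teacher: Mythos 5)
Your derivation of the two identities in (\ref{3.10}) is essentially the paper's own: both arguments combine $d_{A}\a=d_{A}^{\ast}\a=0$ with $d(\ast\phi)=0$ and the pointwise fact $\La^{2}_{14}=\ker(\,\cdot\wedge\ast\phi)$, and your auxiliary identity $d_{A}^{\ast}\a^{7}=\frac{1}{3}\ast d_{A}(\a\wedge\phi)=0$ is exactly the step the paper isolates; applying $d_{A}$ to $\ast\phi\wedge\b=\ast\a^{7}$ is just the Hodge dual of the paper's move of applying $d_{A}^{\ast}$ to both sides of (\ref{3.7}). The genuine divergence is in the flat case. The paper stays computational: it applies $d_{A}^{\ast}$ to the already-established identity $\Pi^{2}_{7}(d_{A}\b)=0$, observes that the extra term $\ast d_{A}(d_{A}\b\wedge\phi)=\ast([F_{A},\b]\wedge\phi)$ vanishes by flatness, and concludes $d_{A}^{\ast}d_{A}\b=0$, hence $d_{A}\b=0$ by an (implicit) cutoff integration by parts. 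You instead extend Proposition \ref{P1} to flat connections --- legitimate, since a flat connection admits local parallel frames in which $d_{A}$ acts componentwise as $d$, so the supercommutator identities hold verbatim, though this is a claim the paper never states --- to get that $\De_{A}$ commutes with $\ast$, $L_{\phi}$, $L_{\ast\phi}$, hence with $\Pi^{2}_{7}$ and $C$, and then deduce $\De_{A}\b=0$ from injectivity of $C$ on one-forms, and $d_{A}\b=0$ from the Gaffney property of $L^{2}$ harmonic forms on complete manifolds. Your route needs more input but delivers the stronger intermediate statement that $\b$ is $\De_{A}$-harmonic, which is in fact what the proof of Theorem \ref{T2} later invokes; the paper's route is more elementary and self-contained. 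Two small points: the injectivity you need is that of $C\colon\La^{1}\rightarrow\La^{2}_{7}$, which is pointwise $G_{2}$ linear algebra built into (\ref{3.7}) itself (the formula there writes down the inverse), not the content of Lemma \ref{L2}, which concerns $L_{\phi}$; and both you and the paper rely, without comment, on $\b\in L^{2}$ (true, since $\a\in L^{2}$ and $\ast\phi$ is parallel hence bounded) to run the final completeness argument.
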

	\begin{proof} 
		Our proof uses the author's argument in \cite{Hua3} for Yang-Mills connections. We compose $\a=\a^{7}+\a^{14}$; thus, $\a^{7}\wedge\ast\phi=\a\wedge\ast\phi$. From the identity $d_{A}\a=0$ and the fact $d\ast\phi=0$, we have
		\begin{equation}\nonumber
		0=d_{A}(\a^{7}\wedge\ast\phi)=d_{A}(\a\wedge\ast\phi)=3d_{A}\ast\b.
		\end{equation}
		Further more, using  the fact $d_{A}^{\ast}\a=d_{A}\a=0$ and $\a^{7}=\frac{1}{3}(\a+\ast(\a\wedge\phi))$, we have
		\begin{equation}\label{E4.8}
		d_{A}^{\ast}\a^{7}=\frac{1}{3}\ast d_{A}(\a\wedge\phi)=0.
		\end{equation}
		Applying operator $d^{\ast}_{A}$ to $C(\b)=\a^{7}$, following Equation (\ref{E4.8}), we get
		\begin{equation}\label{3.3}
		\ast(d_{A}\b\wedge\ast\phi)=0,\ i.e.,\ \Pi^{2}_{7}(d_{A}\b)=0.
		\end{equation}
		If $\a$ is in $L^{2}(X)$, by the definition of $\b$, we obtain that $|\b|\lesssim|\a^{7}|$, i.e., $\b$ is also in $L^{2}$. Furthermore, if $A$ is a flat connection, we have
		$$0=d_{A}^{\ast}\Pi^{2}_{7}(d_{A}\b)=d_{A}^{\ast}d_{A}\b+\ast d_{A}(d_{A}\b\wedge\phi)=d_{A}^{\ast}d_{A}\b.$$
		Then $d_{A}\b=0$. We complete this proof.
	\end{proof}
	\begin{theorem}\label{T2}
		Let $(X,\phi)$ be a complete $G_{2}$-manifold with a $d$(linear) $G_{2}$-structure $\phi$, $E$ be a principal $G$-bundle over $X$ and $A$ be a smooth connection on $E$. If $A$ is a flat connection, then $H^{p}_{(2)}(X,E)=0$ unless $p\neq 3,4$.
	\end{theorem}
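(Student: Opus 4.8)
The plan is to run the proof of Theorem \ref{T4} essentially verbatim, but on the $d_A$-twisted de Rham complex rather than on the ordinary one. The scalar argument rested on three ingredients: the commutation $[\De,L_\phi]=0$ (Proposition \ref{P1}), the vanishing $\w\wedge\a=0$ for $L^2$-harmonic $\a$ (Theorem \ref{T1}), and the fibrewise injectivity of $L_\phi$ in degrees $0,1,2$ (Lemma \ref{L2}). Each of these survives the replacement $d\mapsto d_A$, $d^{\ast}\mapsto d_A^{\ast}$ \emph{precisely because $A$ is flat}, and assembling the twisted versions yields the theorem.

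First I would record that the superalgebra identities behind Proposition \ref{P1} use only $\{d,d\}=0$. Since $A$ is flat, $\{d_A,d_A\}=2d_A^2=2F_A\wedge(\cdot)=0$, so the same formal manipulations apply to $d_A$. Two inputs must be checked directly. The twisted closedness $\{L_\phi,d_A\}=0$ follows from $d_A(\phi\wedge\a)=d\phi\wedge\a-\phi\wedge d_A\a=-\phi\wedge d_A\a$, using $d\phi=0$. The twisted form of Lemma \ref{L4}, namely $\{L_\phi,d_A^{\ast}\}=\{d_A,C\}$, has an unchanged proof because the structure operator $C$ is built only from $\ast$ and the parallel form $\phi$ and commutes through the $E$-part of $d_A$. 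Granting these, the Jacobi-identity computation of Proposition \ref{P1} gives $[\De_A,L_\phi]=(-1)^{\tilde\phi}\{d_A,\{L_\phi,d_A^{\ast}\}\}=(-1)^{\tilde\phi}\{d_A,\{d_A,C\}\}=0$. Consequently, if $\a\in H^p_{(2)}(X,E)$ then $\phi\wedge\a$ is again $\De_A$-harmonic, hence (by completeness) both $d_A$- and $d_A^{\ast}$-closed.

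Next I would prove the twisted analogue of Theorem \ref{T1}: for $\a\in H^p_{(2)}(X,E)$ one has $\phi\wedge\a=0$. Write $\phi=d\b_0$ with $\b_0$ a $2$-form of linear growth, set $\nu=\b_0\wedge\a$, and use the cutoff $f_j$ from the proof of Theorem \ref{T1}. Because $d_A\a=0$ one computes $d_A\nu=\phi\wedge\a$, and because $\phi\wedge\a$ is $d_A^{\ast}$-closed the identity $\langle d_A^{\ast}(\phi\wedge\a),f_j\nu\rangle_{L^2(X)}=0$ expands exactly as in \eqref{E3}; in the limit it produces $\|\phi\wedge\a\|^2_{L^2(X)}$ against an error term bounded by $(j+1)\int_{B_{j+1}\setminus B_j}|\a|^2$, which tends to zero along a subsequence by the $L^2$ bound on $\a$, just as in \eqref{E1}. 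Hence $\phi\wedge\a=0$ in every degree.

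Finally, the pointwise identities in Lemma \ref{L2} are statements about the $G_2$-representation on $\La^{\bullet}$ and therefore hold fibrewise after tensoring with $E$; thus $L_\phi\colon\La^p(X,E)\to\La^{p+3}(X,E)$ is injective for $p=0,1,2$, and $\phi\wedge\a=0$ forces $\a=0$, i.e. $H^p_{(2)}(X,E)=0$ for $p=0,1,2$. Applying the Hodge star, which commutes with $\De_A$ (since $d_A^{\ast}=-\ast d_A\ast$) and preserves the $L^2$ norm while sending $E$-valued $p$-forms to $E$-valued $(7-p)$-forms, gives $H^p_{(2)}(X,E)\cong H^{7-p}_{(2)}(X,E)$, so the vanishing propagates to $p=5,6,7$; only $p=3,4$ can survive. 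The main obstacle is the first step: the identity $[\De_A,L_\phi]=0$ is exactly what fails for a general connection, since then $\{d_A,d_A\}=2F_A\wedge(\cdot)\neq0$ breaks every superalgebra manipulation behind Proposition \ref{P1}. This is why the theorem is stated only for flat $A$, the small-curvature regime being handled separately in Theorem \ref{T8}.
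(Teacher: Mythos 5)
Your proof is correct, but it takes a genuinely different route from the paper's. The paper never twists Verbitsky's superalgebra identities; it argues degree by degree. For $p=0,1$ it uses the Weitzenb\"{o}ck formula --- Ricci-flatness of the $G_{2}$ metric plus flatness of $A$ give $\De_{A}=\na_{A}^{\ast}\na_{A}$ --- together with the Kato inequality, reducing to the scalar vanishing of Corollary \ref{C1}. For $p=2$ it decomposes $\a=\a^{7}+\a^{14}$, converts $\a^{7}$ into a $1$-form $\b$ via the structure operator as in (\ref{3.7}), shows in Lemma \ref{3.9} that flatness makes $\b$ harmonic for $\De_{A}$ (hence zero by the $p=1$ case), and then kills $\a^{14}$ through the pointwise identity $\ast\a=-\a\wedge\phi$ on $\La^{2}_{14}$ combined with the Stokes-type Lemma \ref{L1}, which gives $\|\a\|^{2}_{L^{2}(X)}=-\int_{X}tr(\a\wedge\a)\wedge\phi=0$. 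You instead push the scalar machinery (Proposition \ref{P1}, Theorem \ref{T1}, Lemma \ref{L2}) through the twisted complex: flatness gives $d_{A}^{2}=0$, so the superalgebra computation carries over, and the cutoff argument and the injectivity of $L_{\phi}$ then apply wholesale. Your one unproved assertion --- the twisted form of Lemma \ref{L4} --- is indeed an algebraic, pointwise identity in which the $E$-factor is inert, though you should note it uses compatibility of $A$ with a fibre metric (implicit throughout the paper) so that the connection term of $d_{A}^{\ast}$ is the pointwise adjoint of that of $d_{A}$. Each approach buys something: yours is more structural, treats $p=0,1,2$ uniformly, makes explicit the duality step for $p=5,6,7$ that the paper leaves implicit, and isolates flatness as exactly the hypothesis making the twisted complex behave like the untwisted one; the paper's is more computational, but its intermediate objects --- the $1$-form $\b$ of (\ref{3.7}), Lemma \ref{3.9}, and the integral identity --- are precisely the ones that survive in estimate form when flatness is relaxed to smallness of $\|F_{A}\|_{L^{7/2}(X)}$ in Theorem \ref{T8}, a regime your superalgebra argument cannot reach, as you note yourself.
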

	\begin{proof}
		Following Proposition \ref{P4.6}, we obtain that $H^{k}_{(2)}(X,E)=0$, $k=0,1$. We only need to  show $H^{2}_{2}(X,E)=\{0\}$. We denote $\a\in H^{2}_{(2)}(X,E)$, $\b$ is defined as (\ref{3.7}). If $A$ is a flat connection, following Lemma \ref{3.9}, $\b$ is also harmonic with respect to $\De_{A}$. By Proposition \ref{P4.6}, $\b=0$, i.e., $\a^{7}=0$. It implies that the $L^{2}$-harmonic $2$-form $\a$ also on $\La^{2}_{21}(X)\otimes E$, i.e., $\a+\ast(\a\wedge\phi)=0$. Thus we have an identity, $\|\a\|^{2}_{L^{2}(X)}=-\int_{X}tr(\a\wedge\a)\wedge\phi$. It is easy to see $tr(\a\wedge\a)$ is an closed $L^{1}$ form, following Lemma \ref{L1}, we have $\|\a\|_{L^{2}(X)}=0$, i.e., $\a=0$.
	\end{proof}
	Let us recall that from Bishop-Gromov's volume comparison theorem, we can define the asymptotic volume ratio 
	$$V_{X}:=\lim_{r\rightarrow\infty}\frac{V(r)}{r^{n}}$$
	where $V(r)$ is the volume of geodesic ball $B(r)$ centered at $p$ with radius $r$. And the above definition is independent of $p$, so we omit $p$ here. If $V_{X}>0$, we say that $(X,\rm{g})$ has maximal volume growth. We suppose that the complete manifold $X$ is Ricci-flat, then  $X$ has  maximal volume growth is equivalence to any $u\in C^{\infty}_{c}(X)$ satisfies the Sobolev inequality \cite{Sal}:
	$$\|u\|_{L^{\frac{2n}{n-2}}(X)}\lesssim\|\na u\|_{L^{2}(X)}.$$
	We then prove an useful 
	\begin{lemma}\label{L7}
		Let $(X^{n},\w)$ be a complete Ricci-flat Riemannian manifold with  maximal volume growth, $E$ be a principal $G$-bundle over $X$ and $A$ be a smooth connection on $E$. Then there is a positive constant $\de$ with following significance. If the curvature $F_{A}$ obeys
		\begin{equation}\label{E11}
		\|F_{A}\|_{L^{\frac{n}{2}}(X)}\leq\de
		\end{equation}
		then any $\a\in\La^{1}_{(2)}(X,E)$ satisfies the inequality
		$$ \|\a\|^{2}_{L^{\frac{2n}{n-2}}(X)}\leq c\langle\De_{A}\a,\a\rangle_{L^{2}(X)}.$$
		In particular,  $H^{1}_{(2)}(X,E)=\{0\}$.
	\end{lemma}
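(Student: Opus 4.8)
The plan is to feed the Weitzenb\"ock formula into the Kato and Sobolev inequalities, using the smallness of $\|F_A\|_{L^{n/2}(X)}$ to absorb the curvature term on the right. For an $E$-valued $1$-form $\a$ the Weitzenb\"ock formula reads
$$\De_A\a=\na_A^{\ast}\na_A\a+Ric(\a)+\F_A(\a),$$
where $\F_A(\a)$ is the zeroth order operator given by the algebraic action of the curvature, so that $|\F_A(\a)|\lesssim|F_A|\,|\a|$ pointwise. Since $(X,g)$ is Ricci-flat the curvature-of-$X$ term $Ric(\a)$ vanishes identically, which is the key structural simplification.

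Next I would pair the formula with $\a$ and integrate by parts to obtain
$$\langle\De_A\a,\a\rangle_{L^2(X)}=\|\na_A\a\|^2_{L^2(X)}+\langle\F_A(\a),\a\rangle_{L^2(X)}.$$
On the complete non-compact $X$ this integration by parts must be justified by inserting the cutoff functions $f_j$ from the proof of Theorem \ref{T1}, integrating by parts against $f_j^2\a$, and letting $j\to\infty$; the boundary contributions supported on $B_{j+1}\setminus B_j$ tend to zero because $\a,\na_A\a\in L^2$. The curvature term is then controlled by H\"older's inequality with exponents $\frac n2$ and $\frac{n}{n-2}$,
$$|\langle\F_A(\a),\a\rangle_{L^2(X)}|\lesssim\int_X|F_A|\,|\a|^2\,dx\leq\|F_A\|_{L^{n/2}(X)}\,\|\a\|^2_{L^{2n/(n-2)}(X)},$$
while the Kato inequality $|\na|\a||\leq|\na_A\a|$ combined with the Sobolev inequality (available precisely because $X$ is Ricci-flat with maximal volume growth) yields $\|\a\|_{L^{2n/(n-2)}(X)}\lesssim\|\na_A\a\|_{L^2(X)}$.

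Assembling these estimates gives, with $C_S$ the Sobolev constant and $C'$ the constant from the pointwise curvature bound,
$$\|\a\|^2_{L^{2n/(n-2)}(X)}\leq C_S\langle\De_A\a,\a\rangle_{L^2(X)}+C_S C'\,\|F_A\|_{L^{n/2}(X)}\,\|\a\|^2_{L^{2n/(n-2)}(X)}.$$
Choosing $\de$ so small that $C_S C'\de\leq\frac12$ lets me absorb the last term into the left-hand side, producing the asserted inequality $\|\a\|^2_{L^{2n/(n-2)}(X)}\leq 2C_S\langle\De_A\a,\a\rangle_{L^2(X)}$. For the final assertion, any $\a\in H^1_{(2)}(X,E)$ satisfies $\De_A\a=0$, so the right-hand side vanishes and hence $\a\equiv0$. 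The main obstacle I anticipate is not the absorption trick, which is routine once $\de$ is fixed, but the analytic bookkeeping on the non-compact manifold: one must verify that an $L^2$ harmonic $1$-form has $\na_A\a\in L^2$, that the cutoff boundary terms genuinely vanish in the limit, and that the Sobolev inequality, stated only for $C^{\infty}_c(X)$, may legitimately be applied to the non-compactly-supported Lipschitz function $|\a|$ through an approximation argument.
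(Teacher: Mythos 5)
Your proposal is correct and takes essentially the same route as the paper's proof: the Weitzenb\"{o}ck formula with the Ricci term vanishing by Ricci-flatness, H\"{o}lder's inequality with exponents $\frac{n}{2}$ and $\frac{n}{n-2}$ on the curvature term, the Kato inequality, the Sobolev inequality supplied by maximal volume growth, and absorption of the curvature term by taking $\de$ small. The only difference is that you make explicit the cutoff and approximation bookkeeping on the non-compact manifold, which the paper leaves implicit; that is a refinement of the same argument, not a different one.
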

	\begin{proof}
		We observe that
		\begin{equation}\nonumber
		|\langle F_{A},[\a\wedge\a]\rangle_{L^{2}(X)}|\lesssim\|F_{A}\|_{L^{\frac{n}{2}}(X)}\|\a\|^{2}_{L^{\frac{2n}{n-2}}(X)},
		\end{equation}
		thus we have
		\begin{equation}\nonumber
		\begin{split}
		\langle \De_{A}\a,\a\rangle_{L^{2}(X)}&\geq\|\na_{A}\a\|^{2}_{L^{2}(X)}-C_{1}\|F_{A}\|_{L^{\frac{n}{2}}(X)}\|\a\|^{2}_{L^{\frac{2n}{n-2}}(X)}\\
		&\geq\|\na|\a|\|^{2}_{L^{2}(X)}-C_{1}\|F_{A}\|_{L^{\frac{n}{2}}(X)})\|\a\|^{2}_{L^{\frac{2n}{n-2}}(X)}\\
		&\geq(C_{2}-C_{1}\|F_{A}\|_{L^{\frac{n}{2}}(X)})\|\a\|^{2}_{L^{\frac{2n}{n-2}}(X)}\\
		\end{split}
		\end{equation}
		where $C_{1},C_{2}$ are positive constant only dependent on $X$. We can choose $\de$ sufficiently small to ensure that $\|F_{A}\|_{L^{\frac{n}{2}}(X)}\leq\frac{C_{2}}{2C_{1}}$, thus we complete the proof of this lemma.
	\end{proof}
	\begin{theorem}\label{T8}
		Let $(X,\phi)$ be a complete  $G_{2}$-manifold with  maximal volume growth, $E$ be a principal $G$-bundle over $X$ and $A$ be a smooth connection on $E$. If the $G_{2}$-structure $\phi$ is $d$(linear), then there is a positive constant $\de$ with following significance. If the curvature $F_{A}$ obeys 
		$$\|F_{A}\|_{L^{\frac{7}{2}}(X)}\leq\de,$$
		then
		$$H^{2}_{(2)}(X,E)=0.$$
	\end{theorem}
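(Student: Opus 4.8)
The plan is to show that every $\a\in H^{2}_{(2)}(X,E)$ vanishes by treating separately the two $G_{2}$-components in the splitting $\a=\a^{7}+\a^{14}$, where $\a^{i}\in\La^{2}_{i}(X)\otimes E$. The component in $\La^{2}_{14}$ can be disposed of exactly as in the flat case of Theorem \ref{T2}: once we know $\a^{7}=0$, the form $\a=\a^{14}$ satisfies $\a+\ast(\a\wedge\phi)=0$, so that
$$\|\a\|^{2}_{L^{2}(X)}=-\int_{X}tr(\a\wedge\a)\wedge\phi ,$$
and since $\a$ is $\De_{A}$-harmonic and lies in $L^{2}$ we have $d_{A}\a=0$, whence $tr(\a\wedge\a)$ is a closed $L^{1}$ four-form while $\phi$ is a $d$(linear) three-form (here $n-k=7-3=4$); Lemma \ref{L1} then forces the right-hand side to be zero. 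Thus the whole difficulty is concentrated in proving $\a^{7}=0$ in the absence of flatness, and this is where the curvature smallness must enter.

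To control $\a^{7}$, I would introduce the one-form $\b$ defined in (\ref{3.7}), which lies in $\La^{1}_{(2)}(X,E)$ (it is obtained from the $L^{2}$ form $\a^{7}$ by an algebraic operation with the parallel, hence bounded, form $\phi$) and which, by Lemma \ref{3.9}, satisfies $d^{\ast}_{A}\b=0$ and $\Pi^{2}_{7}(d_{A}\b)=0$. The key step is to repeat the computation of Lemma \ref{3.9} but \emph{without} assuming $A$ flat, retaining the curvature term. Starting from $\Pi^{2}_{7}(d_{A}\b)=\tfrac13\big(d_{A}\b+\ast(d_{A}\b\wedge\phi)\big)=0$, applying $d^{\ast}_{A}$, and using $d\phi=0$ together with $d_{A}^{2}\b=[F_{A}\wedge\b]$, one obtains the error identity
$$d^{\ast}_{A}d_{A}\b=-\ast\big([F_{A}\wedge\b]\wedge\phi\big).$$
Since $d^{\ast}_{A}\b=0$, pairing with $\b$ gives $\|d_{A}\b\|^{2}_{L^{2}(X)}=\langle d^{\ast}_{A}d_{A}\b,\b\rangle_{L^{2}(X)}=-\langle\ast([F_{A}\wedge\b]\wedge\phi),\b\rangle_{L^{2}(X)}$.

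Now I would estimate the right-hand side. Because $\phi$ is parallel, hence of constant pointwise norm, the integrand is bounded by $|F_{A}|\,|\b|^{2}$, so taking absolute values and applying H\"older's inequality with exponents $\tfrac72$ and $\tfrac75$ (note $\tfrac{2}{7}+\tfrac{5}{7}=1$ and $2\cdot\tfrac{7}{5}=\tfrac{14}{5}=\tfrac{2n}{n-2}$ for $n=7$) yields $\|d_{A}\b\|^{2}_{L^{2}(X)}\lesssim\|F_{A}\|_{L^{7/2}(X)}\,\|\b\|^{2}_{L^{14/5}(X)}$. On the other hand, since $d^{\ast}_{A}\b=0$ we have $\langle\De_{A}\b,\b\rangle_{L^{2}(X)}=\|d_{A}\b\|^{2}_{L^{2}(X)}$, so Lemma \ref{L7} (applied with $n=7$, its hypothesis reading $\|F_{A}\|_{L^{7/2}(X)}\leq\de$) gives $\|\b\|^{2}_{L^{14/5}(X)}\leq c\,\|d_{A}\b\|^{2}_{L^{2}(X)}$. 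Combining the two estimates,
$$\|\b\|^{2}_{L^{14/5}(X)}\leq c\,c'\,\|F_{A}\|_{L^{7/2}(X)}\,\|\b\|^{2}_{L^{14/5}(X)}\leq c\,c'\,\de\,\|\b\|^{2}_{L^{14/5}(X)},$$
and choosing $\de$ small enough (and below the threshold demanded by Lemma \ref{L7}) that $c\,c'\de<1$ forces $\b=0$. By (\ref{3.7}) we then have $\a^{7}=C(\b)=\ast(\ast\phi\wedge\b)=0$, and the first paragraph finishes the proof that $H^{2}_{(2)}(X,E)=0$.

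The main obstacle I anticipate is the middle step: deriving the non-flat error identity with the correct sign and constant, and then making the absorption argument fully rigorous. In particular one must verify that $\b$ genuinely belongs to the class to which Lemma \ref{L7} applies and that the integrations by parts leading to $\langle\De_{A}\b,\b\rangle_{L^{2}(X)}=\|d_{A}\b\|^{2}_{L^{2}(X)}$ and to the pairing above are justified on the complete, non-compact $X$; this is handled by the same compactly supported cutoffs $f_{j}$ used in the proof of Theorem \ref{T1}, together with the a priori $L^{2}$-finiteness of $\b$ and $d_{A}\b$. The single constant $\de$ must simultaneously meet the smallness needed in Lemma \ref{L7} and the smallness $c\,c'\de<1$ needed for the absorption, but since both thresholds depend only on $X$ there is no circularity.
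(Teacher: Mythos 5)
Your proposal is correct and takes essentially the same route as the paper's own proof: the same one-form $\b$ from (\ref{3.7}), the same non-flat identity $d_{A}^{\ast}d_{A}\b+\ast([F_{A}\wedge\b]\wedge\phi)=0$, the same combination of Lemma \ref{L7} with the H\"older estimate $|\int_{X}tr(F_{A}\wedge[\b\wedge\b])\wedge\phi|\lesssim\|F_{A}\|_{L^{7/2}(X)}\|\b\|^{2}_{L^{14/5}(X)}$ and the absorption argument, and the same disposal of the remaining $\La^{2}_{14}$-component via $\a+\ast(\a\wedge\phi)=0$ and Lemma \ref{L1}. The only (welcome) addition is that you make explicit the cutoff justification of the integrations by parts on the complete non-compact $X$, which the paper leaves implicit.
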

	\begin{proof}
		We denote $\a\in H^{2}_{(2)}(X,E)$ and $\b$ is defined as (\ref{3.7}). Then following Lemma \ref{3.3}, $\b$ satisfies
		$$0=d_{A}^{\ast}d_{A}\b+\ast([F_{A}\wedge\b]\wedge\phi).$$
		Taking the inner product of this equation with $\b$ yields
		$$0=\langle \De_{A}\b,\b\rangle_{L^{2}(X)}+\int_{X}tr(F_{A}\wedge[\b\wedge\b])\wedge\phi.$$
		For a smooth connection  $A$ with $\|F_{A}\|_{L^{\frac{7}{2}}(X)}\leq\de$, where $\de$ is a constant in the hypotheses of Lemma \ref{L7}, we have
		$$\|\b\|^{2}_{L^{\frac{14}{5}}(X)}\lesssim\langle\De_{A}\b,\b\rangle_{L^{2}(X)}.$$
		We also observe that
		\begin{equation}\nonumber
		|\int_{X}tr(F_{A}\wedge[\b\wedge\b])\wedge\phi|\lesssim\|F_{A}\|_{L^{\frac{7}{2}}(X)}\|\b\|^{2}_{L^{\frac{14}{5}}(X)}.
		\end{equation}
		Combining the preceding  inequalities gives
		\begin{equation}\nonumber
		\begin{split}
		0&\geq\langle\De_{A}\b,\b\rangle_{L^{2}(X)}-C_{3}\|F_{A}\|_{L^{\frac{7}{2}}(X)}\|\b\|^{2}_{L^{\frac{14}{5}}(X)}\\
		&\geq(C_{4}-C_{3}\|F_{A}\|_{L^{\frac{7}{2}}(X)})\|\b\|^{2}_{L^{\frac{14}{5}}(X)}.\\
		\end{split}
		\end{equation}
		where $C_{3},C_{4}$ are positive constants dependent on $X$. We can choose $\de$ sufficiently small to ensure that $\|F_{A}\|_{L^{\frac{7}{2}}(X)}\leq\frac{C_{4}}{2C_{3}}$; hence, $\b\equiv0$. It implies that $\a\in\La^{2}_{21}(X)\otimes E$. Hence following Lemma \ref{L1}, $\|\a\|^{2}_{L^{2}(X)}=-\int_{X}tr(\a\wedge\a)\wedge\phi=0$, i.e., $\a=0$.
	\end{proof}
	A connection is called a Yang-Mills connection if it is a critical point of the Yang-Mills functional $\mathrm{YM}(A)$, i.e., $d_{A}^{\ast}F_{A}=0$. In addition, all connections satisfy the Bianchi identity $d_{A}F_{A}=0$. It implies that the Yang-Mills connection is a harmonic $2$-form with respect to $\De_{A}$. There are very few gap results of Yang-Mills connection over non-compact, complete manifold, for example \cite{DM,EFM,Ger,Min}. Their results all depend on some positive conditions of Riemannian curvature tensors. Following Theorem \ref{T8}, we have a gap result for Yang-Mills connection on a complete $G_{2}$-manifold.
	\begin{corollary}
		Let $(X,\phi)$ be a complete $G_{2}$-manifold with a $d$(linear) $G_{2}$-structure $\phi$, $E$ be a principal $G$-bundle over $X$ and $A$ be a smooth Yang-Mills connection on $E$. If $X$ has maximal volume growth, then there exists a positive constant $\de\in(0,1]$ with following significance. If the curvature $F_{A}\in L^{2}(X)$ obeys
		$$\|F_{A}\|_{L^{\frac{7}{2}}(X)}\leq\de,$$
		then $A$ is a flat connection.
	\end{corollary}
	\section*{Acknowledgements}
	I would like to thank the anonymous referee for  careful reading of my manuscript and helpful comments.  I would like to thank Professor Verbitsky for kind comments regarding his article \cite{Ver}. Also I would like to thank Yuguo Qin for further discussions about this work. This work is supported by Nature Science Foundation of China No. 11801539 and Postdoctoral Science Foundation of China No. 2017M621998, No. 2018T110616.
	
	\bigskip
	\footnotesize

\end{document}